\theoremstyle{plain}
\newtheorem{Th}{Theorem}[section]
\newtheorem{Lem}[Th]{Lemma}
\newtheorem{Cor}[Th]{Corollary}
\theoremstyle{definition}
\newtheorem{Rem}[Th]{Remark}
\def\@setemails{%
  \ifnum\theg@author > 1
\mbox{{\itshape E-mail addresses}:\space}{\ttfamily\emails}. \else\mbox{{\itshape E-mail address}:\space}{\ttfamily\emails}. \fi%
}
\newcommand{\be} {\begin{equation}}
\newcommand{\ee} {\end{equation}}
\newcommand{\la} {\lambda}
\newcommand{\R} {\mathbb{R}}
\newcommand{\s} {\mathbb{S}}
\newcommand{\ds}{{\rm d}\sigma}
\newcommand{\authorfootnotes}{\renewcommand\thefootnote{\@fnsymbol\c@footnote}}%
\def\e{{\text{e}}}
\numberwithin{equation}{section} \allowdisplaybreaks
\title[Stability for Onofri-Type Inequality]{Quantitative stability  for the conformally invariant Chang-Gui inequality on the exponentiation of functions on the sphere}
\author[M. Ghosh]{Monideep Ghosh}
\author[D. Karmakar]{Debabrata Karmakar}
\email{monideep@tifrbng.res.in, debabrata@tifrbng.res.in}
 \address[]{Tata Institute of Fundamental Research, Centre For Applicable Mathematics}
 \address[]{Post Bag No 6503, GKVK Post Office,
Sharada Nagar, Chikkabommasandra,
Bangalore 560065,
Karnataka, India}
\keywords{Moser-Onofri-Aubin  inequality, Conformal invariance, Stability}
\subjclass[2020]{35A23}%{35A01, 35A15, 35B09, 35B33}
\begin{document}
\begin{abstract}
In this work, we focus on a recent variant of the Trudinger-Moser-Onofri inequality introduced by S. Y. Alice Chang and Changfeng Gui \cite{CG-2023}:
\begin{align*}
\alpha\int_{\s^2}|\nabla_{\s^2}u|^2 {\rm d}\omega+2 \int_{\s^2} u {\rm d}\omega -\frac{1}{2}\ln\left[\left(\int_{\s^2}e^{2u}{\rm d}\omega\right)^2-\sum_{i=1}^3\left(\int_{\s^2}\omega_i e^{2u}{\rm d} \omega\right)^2\right] \geq 0
\end{align*}
holds on $H^1(\s^2)$ if and only if $\alpha \geq \frac{2}{3}$. In this regime, the infimum is attained only by trivial functions when
$\alpha > \frac{2}{3},$ whereas for the critical value $\alpha = \frac{2}{3}$ nontrivial extremals exist, and Chang-Gui further provided a complete classification of such solutions.

Building upon their result, we found a nice conformal invariance of the associated functional. Exploiting this invariance, we were able to characterize the full family of extremals in terms of conformal maps of $\s^2$ and, moreover, establish a sharp quantitative stability result in the gradient norm.
 
\end{abstract}
\maketitle
{\small\tableofcontents}
\section{Introduction}
In this article, we investigate the stability properties of a variant of the Onofri inequality on $\mathbb{S}^2$, introduced by Chang and Gui \cite{CG-2023}. For $u \in H^1(\mathbb{S}^2)$, define
\begin{align}\label{Chang-GuiIneq}
I_{\alpha}(u) = \alpha\int_{\s^2}|\nabla_{\s^2}u|^2 {\rm d}\omega+2 \int_{\s^2} u {\rm d}\omega -\frac{1}{2}\ln\left[\left(\int_{\s^2}e^{2u}{\rm d}\omega\right)^2-\sum_{i=1}^3\left(\int_{\s^2}\omega_i e^{2u}{\rm d} \omega\right)^2\right].
\end{align}
Here and throughout, $\s^2$ denotes the standard two-dimensional sphere equipped with the round metric, $\nabla_{\s^2}$ is the gradient operator, ${\rm d}\omega$ is the normalized surface area measure, and $H^1(\s^2)$ is the Sobolev space of square-integrable functions with square-integrable gradient.

\medskip

Chang and Gui proved that $I_{\alpha}(u) \geq 0$ for all $u\in H^1(\s^2)$ if and only if $\alpha \geq \frac{2}{3}.$ More precisely, they established the sharp inequality
\begin{align}\label{CGineq}
I_{\alpha}(u) \geq \left(\alpha - \frac{2}{3}\right)\int_{\s^2}|\nabla_{\s^2}u|^2 {\rm d}\omega, 
\end{align}
valid for every $u \in H^1(\s^2)$. In addition, they showed that
 \begin{align*}
\inf_{u\in H^1(\s^2)} I_{\alpha}(u) = -\infty \ \ \  \mbox{if} \ \ \alpha < \frac{2}{3}.
 \end{align*}

 The inequality \eqref{CGineq} can be viewed as a generalization of the Lebedev–Milin inequality, extending the exponentiation of functions from the unit circle to the unit $2$-sphere. It may also be interpreted as the spherical analogue of the second inequality in Szeg\"{o}’s limit theorem for Toeplitz determinants on the circle.

 Let $D$ denote the unit disk in $\mathbb{R}^2 \sim \mathbb{C}$ centered at the origin, with boundary $\partial D = \mathbb{S}^1$. The classical Lebedev–Milin inequality can be written as
 \begin{align}
 \frac{1}{4\pi} \|\nabla u \|_{L^2(D)}^2 + \frac{1}{2\pi}\int_{\s^1} u {\rm d}\theta - \ln \left(\frac{1}{2\pi}\int_{\s^1} e^u {\rm d}\theta\right) \geq 0
 \end{align}
 for a harmonic function $u.$ 
 It is the first in a sequence of monotonically increasing inequalities in the Szeg\"{o} limit theorem \cite[5.5a]{GS-1958} on Toeplitz determinants. The special case of the second inequality in this sequence reads
\begin{align}
\frac{1}{8\pi} \|\nabla u \|_{L^2(D)}^2 + \frac{1}{2\pi}\int_{\s^1} u {\rm d}\theta - \ln \left(\frac{1}{2\pi}\int_{\s^1} \e^u {\rm d}\theta\right) \geq 0,%-\left|\frac{1}{2\pi}\int_{\s^1} \e^u\e^{i\theta} d\theta\right|
\end{align}
and holds for all functions $u$ satisfying $\int_{\s^1} e^u e^{i\theta} {\rm d}\theta=0$, where $i$ denotes the imaginary unit.

This inequality was independently verified by Osgood, Phillips, and Sarnak \cite{OPS-1988} in their study of isospectral compactness for metrics on compact surfaces. Later, H. Widom \cite{W-1988} observed that it is a direct consequence of the Szeg\"{o} limit theorem. More generally, there is a whole sequence of such inequalities for functions $u$ satisfying $\int_{\s^1} e^u e^{ij\theta} {\rm d}\theta=0$ for all $1 \leq j \leq k$ for some fixed $k.$

This perspective has been systematically explored on $\mathbb{S}^2$ in a series of recent works \cite{CH-2022,Hang-2022,CZ-2023} by Chang and Hang and others. 
Altogether, their approach provides a new method for proving a sequence of Lebedev-Milin-type inequalities on the unit circle, as well as a related sharp inequality via a perturbation argument. The method is flexible and can be readily adapted to functions subject to various boundary conditions or belonging to higher-order Sobolev spaces.

%This point of view was explored in the context of $\s^2$ in series of recent works of %Chang, Hang, .... \cite{CH-2022,Han-2022,CZ-2023}.%,GHX-2022,GHX-2024,LWY-2024}.

On the other hand, the Chang-Gui inequality can also be viewed as a variant of the classical Trudinger-Moser-Onofri and Trudinger-Moser-Aubin type inequalities on the sphere.

%On the other hand, this inequality can be thought of as a variant of classical inequalities of Trudinger-Moser-Onofri and the Trudinger-Moser-Aubin type inequalities on the sphere. The later one, in its sharp form has been obtained by in seminal work of Gui and Moradifam \cite{GM-2018}, after nearly 30 years on the partial result and their conjecture by Chang and Yang  \cite[Prop. B]{CY-1987}, in their effort to minimize the hypothesis on the curvature function of the Gaussian curvature prescription problem on $\s^2.$

Consider the functional 
\begin{align} \label{TMAubin}
 J_{\alpha}(u)=\alpha \int_{\s^2} |\nabla u|^2 {\rm d} \omega + 2\int_{\s^2} u {\rm d} \omega -\ln{\left(\int_{\s^2}\e^{2u} {\rm d} \omega\right)},
\end{align}
defined on $H^1(\mathbb{S}^2)$. In his study of prescribing Gaussian curvature on $\mathbb{S}^2$ via variational methods, Moser \cite{M-1970, Moser-1973} proved the stronger inequality
\begin{align*}
\sup_{\|\nabla_{\s^2}u\|_2 = 1} \int_{\s^2} e^{4\pi u^2} {\rm d} \omega < \infty,
\end{align*}
from which it follows that $J_1(u)$ is bounded from below on $H^1(\mathbb{S}^2)$. The constant $4\pi$ is sharp and cannot be improved in general. However, if the functions are assumed to be antipodally symmetric, the constant can be improved to $8\pi$.

Later, Onofri \cite{O-1982}, in his investigation of the determinant of the conformal Laplacian, showed that in fact
 \begin{align}\label{Onofri}
 J_{1}(u)= \int_{\s^2} |\nabla u|^2 {\rm d} \omega + 2\int_{\s^2} u {\rm d} \omega -\ln{\left(\int_{\s^2}\e^{2u} {\rm d} \omega\right)} \geq 0,
 \end{align} 
 for all $u \in H^1(\mathbb{S}^2)$. Interestingly, \eqref{Onofri} arose in the study by Onofri and Virasoro \cite{OV-1982} in the context of Polyakov's \cite{P-1981} theory of random surfaces.
 
  Onofri’s proof exploits the conformal invariance of $J_1$ and a result of Aubin mentioned below: for a conformal map $\tau : \s^2 \to \s^2,$ and $u \in H^1(\s^2),$ define 
 \begin{align*}
 u_{\tau} = u \circ \tau + \frac{1}{2}\ln \mathcal{J}_{\tau},
 \end{align*}
where $\mathcal{J}_{\tau}$ is the Jacobian of $\tau.$ Then $J_1$ is invariant under the transformation $u \mapsto u_{\tau}$, and $J_1(u) = 0$ if and only if, up to an additive constant, $u = \frac{1}{2} \ln \mathcal{J}_{\tau}$. We refer the reader to Beckner’s article \cite{B-1993} for the Trudinger–Moser–Onofri inequality in higher dimensions, which is conformally invariant and relies on Lieb’s \cite{Lieb83} sharp form of the Hardy–Littlewood–Sobolev inequality on the sphere. While there have been further analogous developments in higher dimensions in the same spirit, for the sake of brevity we restrict our attention here to the two-dimensional case.
 
Historically, prior to Onofri’s result,  Aubin \cite{A-1979} observed that the critical value $\alpha = 1$ could be lowered under an additional vanishing moment condition: $J_\alpha(u)$ is bounded below for all $u \in H^1(\mathbb{S}^2)$ satisfying 
\begin{align*}
\int_{\s^2} we^{2u} {\rm d} \omega= 0,
\end{align*}
 if and only if $\alpha \geq \frac{1}{2}.$ In their influential work on the Gaussian curvature prescription problem, Chang and Yang \cite{CY-1987} observed that, for values of $\alpha$ close to $1$, the best lower bound of $J_\alpha$ remains $0$.

This result was later established in its sharp form for all admissible $\alpha$ by Gui and Moradifam \cite{GM-2018}, nearly three decades after the partial results and conjectures of Chang and Yang \cite[Prop. B]{CY-1987}, who were motivated by the goal of minimizing assumptions on the curvature function in the Gaussian curvature prescription problem on $\mathbb{S}^2$. Prior to the results of Gui-Moradifam \cite{GM-2018}, there had already been substantial progress in this direction, including the works of Feldman et al.~\cite{FFGG-1998}, Gui-Wei \cite{GW-2000}, C.~S.~Lin \cite{Lin1-2000}, and Ghoussoub-Lin \cite{GL-2010}. For a comprehensive exposition of these developments, we refer the reader to the book by Gui and Moradifam \cite{GM-2013}.

In the similar vein, the classical Nirenberg problem of prescribing Gaussian curvature on $\s^2$
 has generated a substantial body of research. For the reader’s convenience, we highlight only a few seminal contributions \cite{KW-1971, Hong-1986, CY-1987, CY-1988, Han-1990, Lin2-2000, Struwe-2005}, which laid the groundwork for much of the subsequent progress in this area.

\medskip

Let us now return to the Chang-Gui inequality \eqref{CGineq}. In addition to their striking inequality, Chang and Gui also studied the existence of extremizers for 
$I_{\alpha}$. Specifically, they considered the constrained minimization of 
$I_{\alpha}$ over the set with fixed center of mass:
 \begin{align*}
 \mathcal{M}_{a} : = \left\{ u \in H^1(\s^2) \ | \ \int_{\s^2} e^{2u} d \omega= 1, \int_{\s^2} \omega e^{2u} d\omega = a \right\}.
 \end{align*}
They showed that  $\inf_{u \in \mathcal{M}_{a}} I_{\alpha} (u) \geq 0$ if and only if $\alpha \geq \frac{2}{3}$. Moreover, a minimizer is attained and satisfies the Euler-Lagrange equation
 \begin{align}\label{el}
 \alpha \Delta_{\s^2} u(\omega) + \frac{1 - a \cdot \omega}{1 - |a|^2} e^{2u} - 1 = 0,
 \end{align}
where $\Delta_{\s^2}$ is the Laplace-Beltrami operator on $\s^2$ with respect to the standard metric. In addition, they showed that for $\alpha \neq \frac{2}{3},$ 
the only solution to \eqref{el} is the trivial solution $u \equiv 0$, whereas for $\alpha = \frac{2}{3}$, there exists a unique solution for each $a \in B(0,1)$ with center of mass $a = \int_{\s^2} \omega e^{2u} {\rm d}\omega$.
 They also provided an explicit expression for these solutions in stereographic coordinates when the center of mass lies along the $x_3$-axis.

\medskip

In this work, we investigate the quantitative stability of the inequality, seeking to understand how functions that nearly achieve equality must, in turn, remain close to the family of optimizers. Stability of geometric and functional inequalities is a classical theme whose history traces back to the works of Brezis-Lieb \cite{BL-1985} and the Bianchi-Egnell strategy \cite{BE-1991} in the context of Sobolev inequalities on $\R^n, n \geq 3$. The literature on this subject is vast, and it is beyond the scope of this article to provide a detailed account. By contrast, the stability theory for the Moser–Trudinger inequality is rather new, with only a few works available in this direction. To the best of our knowledge, the work of Chen et al.~\cite{CLT-2023} is the first of its kind, where the authors established a local stability result for the Trudinger–Moser–Onofri inequality on the sphere in terms of an $L^2$-distance involving $e^u$, following the approach pioneered by Bianchi and Egnell. However, in this case, due to inconsistencies between the homogeneity of the distance and that of the associated functional, one cannot expect a global stability result. In particular, the usual compactness argument that upgrades local to global stability fails in their setting.

\medskip

More recently, Carlen \cite{Car-2025} circumvented this complication by adopting a different strategy. He exploited the conformal invariance of the functional, as originally used by Onofri, together with the sharp Trudinger-Moser-Aubin inequality of Gui-Moradifam \cite{GM-2018}, to establish a stability result in terms of the gradient norm \ --- \ quite different from the distance considered in \cite{CLT-2023}. In this framework, the issue of homogeneity is resolved: Carlen observed that conformal invariance can be used to translate the center of mass of $e^{2u}$ for any $u \in H^1(\s^2)$ to zero, thereby enabling the application of Aubin’s inequality and yielding stability with respect to the gradient $L^2$-distance.

\medskip

Carlen’s approach serves as the starting point of our analysis. A closer examination of the solutions obtained by Chang and Gui uncovers a striking structural feature: when expressed in stereographic coordinates, their solution can be interpreted as essentially a $\frac{3}{2}$-multiple of the conformal factor arising in the classical Onofri inequality, modulo the addition of a suitable normalizing constant. This parallel naturally leads to the question of whether there exists an underlying conformal invariance associated with the functional $I_{\frac{2}{3}}$.

Motivated by this observation, we established the following invariance property. Let $u \in H^1(\s^2),$ and let $\tau : \s^2 \to \s^2,$ be  a conformal map. Define 
\begin{align*}
u_{\tau} = u \circ \tau + \frac{3}{4} \ln \mathcal{J}_{\tau} + c_{\tau},
\end{align*}
where $\mathcal{J}_{\tau}$ is the Jacobian of $\tau$ and $c_{\tau}$ is a normalizing constant chosen appropriately. With this definition, we verify in Section \ref{S conformal invariance}, Lemma \ref{conf-inv} that the functional is conformally invariant 
\begin{align*}
I_{\frac{2}{3}}(u_{\tau}) = I_{\frac{2}{3}}(u).
\end{align*}

This conformal invariance permits a precise characterization of the extremizers of $I_{\frac{2}{3}}(u_{\tau})$. Specifically, the complete set of extremizers can be described as
\begin{align*}
\mathcal{M}_{\mbox{\tiny{ext}}} =  \left\{ \psi_{\tau} :=  \frac{3}{4}\ln \mathcal{J}_{\tau} + c_{\tau} \ | \ \tau \ \mbox{is a conformal map of} \ \s^2\right\}
\end{align*}
up to the addition of constants.

Having identified the extremal family explicitly, the natural next step is to investigate the quantitative stability of the functional $I_{\frac{2}{3}}$. The central question is whether the deficit in $I_{\frac{2}{3}}$ provides quantitative control, from above, on the $H^1$-distance of a given function to the manifold of extremizers 
$\mathcal{M}_{\mbox{\tiny{ext}}}$. The conformal invariance established above plays a crucial role in this direction, as it enables us to adapt, within this framework, the method recently introduced by Carlen \cite{Car-2025}.

\medskip

We prove the following stability theorem:\

\begin{Th}\label{main}
For every $u \in H^1(\s^2),$
\begin{align*}
I_{\frac{2}{3}}(u) \geq \frac{1}{6} \inf_{\psi \in \mathcal{M}_{\mbox{\tiny{ext}}}} \int_{\s^2} |\nabla_{\s^2}(u - \psi)(\omega)|^2 \ d\omega
\end{align*}
holds.
\end{Th}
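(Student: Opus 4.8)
The plan is to adapt Carlen's conformal-normalization argument \cite{Car-2025}, exploiting the invariance $I_{\frac23}(u_\tau)=I_{\frac23}(u)$ from Lemma \ref{conf-inv} together with the sharp Trudinger--Moser--Aubin inequality of Gui--Moradifam \cite{GM-2018}. The first step is a \emph{balancing} step: given $u\in H^1(\mathbb{S}^2)$, I would produce a conformal map $\tau_0$ of $\mathbb{S}^2$ such that $v:=u_{\tau_0}$ has vanishing center of mass, i.e. $\int_{\mathbb{S}^2}\omega\, e^{2v}\,{\rm d}\omega = 0$. Since the probability density $e^{2u}/\int_{\mathbb{S}^2}e^{2u}\,{\rm d}\omega$ is absolutely continuous, its center of mass lies in the \emph{open} unit ball, and the Möbius group of $\mathbb{S}^2$ acts on that ball through the induced action on centers of mass; this is the classical Hersch-type balancing, obtained by a continuity/degree argument, and I expect it to be the main technical point to make rigorous (continuity of the center-of-mass map on the conformal group and control of its boundary behaviour).

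Granting the balancing, the second step is essentially algebraic. When $\int_{\mathbb{S}^2}\omega_i e^{2v}\,{\rm d}\omega=0$ for $i=1,2,3$, the bracket in $I_{\frac23}$ collapses to $\big(\int_{\mathbb{S}^2}e^{2v}\,{\rm d}\omega\big)^2$, so that
\[
I_{\frac23}(v)=\frac23\int_{\mathbb{S}^2}|\nabla_{\mathbb{S}^2}v|^2\,{\rm d}\omega + 2\int_{\mathbb{S}^2}v\,{\rm d}\omega - \ln\Big(\int_{\mathbb{S}^2}e^{2v}\,{\rm d}\omega\Big)=J_{\frac23}(v).
\]
The Gui--Moradifam sharp form of Aubin's inequality gives $J_{\frac12}(v)\ge 0$ under exactly this vanishing-moment constraint, whence, using the conformal invariance of $I_{\frac23}$ in the first equality,
\[
I_{\frac23}(u)=I_{\frac23}(v)=J_{\frac23}(v)=J_{\frac12}(v)+\Big(\tfrac23-\tfrac12\Big)\int_{\mathbb{S}^2}|\nabla_{\mathbb{S}^2}v|^2\,{\rm d}\omega\ \ge\ \frac16\int_{\mathbb{S}^2}|\nabla_{\mathbb{S}^2}v|^2\,{\rm d}\omega .
\]

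The final step transfers this gradient bound into a bound on the distance to $\mathcal{M}_{\mbox{\tiny{ext}}}$, and here the key is that the Dirichlet energy satisfies $\int_{\mathbb{S}^2}|\nabla_{\mathbb{S}^2}(f\circ\tau)|^2\,{\rm d}\omega=\int_{\mathbb{S}^2}|\nabla_{\mathbb{S}^2}f|^2\,{\rm d}\omega$ for conformal $\tau$, by the two-dimensional conformal invariance of the Dirichlet integral. Writing $\psi_{\tau_0}=\frac34\ln\mathcal{J}_{\tau_0}+c_{\tau_0}\in\mathcal{M}_{\mbox{\tiny{ext}}}$, one has $v=u\circ\tau_0+\psi_{\tau_0}$, hence $v\circ\tau_0^{-1}=u+\psi_{\tau_0}\circ\tau_0^{-1}$. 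The cocycle identity $\mathcal{J}_{\tau\circ\sigma}=(\mathcal{J}_\tau\circ\sigma)\,\mathcal{J}_\sigma$ with $\sigma=\tau_0^{-1}$ yields $(\ln\mathcal{J}_{\tau_0})\circ\tau_0^{-1}=-\ln\mathcal{J}_{\tau_0^{-1}}$, so that $-\psi_{\tau_0}\circ\tau_0^{-1}=\psi_{\tau_0^{-1}}$ up to an additive constant, i.e. it is again an element of $\mathcal{M}_{\mbox{\tiny{ext}}}$. Since the Dirichlet energy ignores additive constants and is invariant under composition with $\tau_0^{-1}$,
\[
\inf_{\psi\in\mathcal{M}_{\mbox{\tiny{ext}}}}\int_{\mathbb{S}^2}|\nabla_{\mathbb{S}^2}(u-\psi)|^2\,{\rm d}\omega\ \le\ \int_{\mathbb{S}^2}|\nabla_{\mathbb{S}^2}(u-\psi_{\tau_0^{-1}})|^2\,{\rm d}\omega=\int_{\mathbb{S}^2}|\nabla_{\mathbb{S}^2}(v\circ\tau_0^{-1})|^2\,{\rm d}\omega=\int_{\mathbb{S}^2}|\nabla_{\mathbb{S}^2}v|^2\,{\rm d}\omega .
\]
Combining this with the previous display gives the claimed inequality with the sharp constant $\frac16$.

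I expect the balancing step to be the principal obstacle: establishing the existence of $\tau_0$ requires controlling the center-of-mass map over the conformal group and ruling out escape to the boundary of the unit ball, and one should also confirm that $v=u_{\tau_0}\in H^1(\mathbb{S}^2)$ with all the energy identities above remaining valid (a routine density/approximation check, since Möbius transformations are smooth diffeomorphisms of $\mathbb{S}^2$). By contrast, the algebraic reduction $I_{\frac23}(v)=J_{\frac23}(v)$ and the conformal invariance of the Dirichlet energy are straightforward once the transformation rule of Lemma \ref{conf-inv} and the explicit description of $\mathcal{M}_{\mbox{\tiny{ext}}}$ are in hand.
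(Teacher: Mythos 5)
Your proposal is correct and follows essentially the same route as the paper: balance the center of mass via a conformal map (the paper's Lemma \ref{Zero C.O.M.}), apply the Gui--Moradifam inequality $J_{\frac12}\ge 0$ to get the gradient bound with constant $\frac16$, and transfer back using the conformal invariance of $I_{\frac23}$ and of the Dirichlet integral together with the identity $\psi_{\tau}\circ\tau^{-1}=-\psi_{\tau^{-1}}+\text{const}$. The only cosmetic difference is that you observe the exact identity $I_{\frac23}(v)=J_{\frac23}(v)$ under the vanishing-moment constraint, whereas the paper only invokes the one-sided bound $I_{\alpha}\ge J_{\alpha}$; and the balancing step you flag as the main obstacle is precisely what the paper establishes by an explicit choice of translation and an intermediate-value argument in the dilation parameter.
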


The article is organized as follows: in Section \ref{snotation} we recall elementary and well known results and terminologies. In Section \ref{sbasiclemmas}, we deduce some computations related to the Jacobian of a conformal map, some are classical, some are possibly new. We use this computation to derive several relations of the mass and center of mass of $\mathcal{J}_{\tau}^{\frac{3}{2}}.$ The results of Section \ref{sbasiclemmas} helps us to derive the conformal invariance of $I_{\frac{2}{3}}$ in Section \ref{S conformal invariance}. Finally, in Section \ref{sstability}, we prove the classification of the extremals of $I_{\frac{2}{3}},$ in terms of the conformal transformations of $\s^2$ and establish the stability result.

\section{Notations and Preliminaries} \label{snotation}
We will identify $\mathbb{R}^2$ with the complex plane $\mathbb{C}$ via 
\begin{align*}
z = x_1+ix_2, \ \ \ \ \ \ \ \ \ \ \ x = (x_1, x_2) \in \mathbb{R}^2,
\end{align*}
and denote by $dx$ the Lebesgue measure on $\mathbb{R}^2.$ 

Let $N = (0,0,1) \in \s^2$ be the north pole. The stereographic projection with respect to $N$ is the map 
\begin{align*}
\mathcal{S} : \s^2 \setminus \{N\} \to \mathbb{R}^2, \ \ \mathcal{S}(\omega)=\left(\frac{\omega_1}{1-\omega_3},\frac{\omega_2}{1-\omega_3}\right), \  \omega:=(\omega_1,\omega_2,\omega_3)\in \s^2 \setminus \{N\},
\end{align*}
with inverse transformation
\begin{align*}
\mathcal{S}^{-1}(x)=\frac{1}{1+|x|^2}\left(2x_1,2x_2,|x|^2-1\right), \ \ x=(x_1,x_2)\in \R^2.
\end{align*}
The Jacobian of $\mathcal{S}^{-1}$ as a map from $\R^2$ to $\R^3$ is $\left(\frac{2}{1+|x|^2}\right)^2$. If $d\sigma$ denotes the (un-normalized) surface area element, then by area formula we have 
\begin{align*}
\int_{\s^2}\varphi(\omega)\ds(\omega)=\int_{\R^2}\varphi\circ \mathcal{S}^{-1}(x)\left(\frac{2}{1+|x|^2}\right)^2{\rm d} x.
\end{align*}
In particular, $\sigma(\s^2)=4\pi.$ We denote by ${\rm d}\omega$ the normalised surface measure 
\begin{align*}
{\rm d}\omega=\frac{1}{4\pi}{\rm d}\sigma,
\end{align*}
so that 
\begin{align*}
\int_{\s^2}\varphi(\omega) {\rm d}\omega=\int_{\s^2}\varphi\circ \mathcal{S}^{-1}(x)\frac{1}{\pi (1+|x|^2)^2} {\rm d} x.
\end{align*}

\medskip

A conformal transformation of $\s^2$ is an angle preserving map $\tau : \s^2 \to \s^2.$ 
Via stereographic projection, such transformations can be identified with fractional linear (M\"{o}bius) transformations on $\mathbb{C}:$
\begin{align*}
z \mapsto \frac{az + b}{cz + d}, \ \ ad - bc = 1, \ \ a,b,c,d \in \mathbb{C}.
\end{align*}
Thus, the conformal group of $\s^2$ is a $6$-dimensional Lie group.

For $\lambda >0$ we denote by $\tau_{\lambda}$ the dilation by $\lambda$ and for $p \in \s^2,$ $\tau_p$ translation that takes the south pole to $p.$ In stereographic coordinates, these correspond to
\begin{align*}
\mathcal{S} \circ \tau_{\lambda} \circ \mathcal{S}^{-1}(x) = \lambda x, \ \ \ \ \mathcal{S} \circ \tau_{p} \circ \mathcal{S}^{-1}(x) =  x+ \mathcal{S}(p).
\end{align*}
Similarly, the inversion $x \mapsto \frac{x}{|x|^2},   x \in \mathbb{R}^2$ induces a conformal transformation of $\s^2$.

In summary, the conformal group of $\s^2$ (identified with the M\"{o}bius group of the Riemann sphere via stereographic projection) is generated by similarities of $\R^2$, that is, compositions of translations, rotations, and dilations together with the inversion $x \mapsto \frac{x}{|x|^2}.$

\medskip

Consider the Minkowski space-time $\R^{1,3} = \{(t, q) : t \in \R, q = (q_1,q_2,q_3) \in \R^3\}$ with  quadratic form 
\begin{align*}
\|(t,q)\|^2 = t^2 - |q|^2 = t^2 - (q_1^2 + q_2^2 + q_3^2),
\end{align*}
induced by the Lorentzian inner product $(t_1, q_1) \odot (t_2, q_2) = t_1t_2 - q_1 \cdot q_2,$ where $\cdot$ denotes the Euclidean dot product. The light cone is the set of all vectors $\{(t, q) \ | \|(t,q)\| = 0\}$, and the future light cone is the subset for which $t>0.$ 
The orthogonal Lorentz group  $O(1,3)$ consisting of all $4\times 4$ real matrices that preserves the quadratic form $\| \cdot \| $ and hence preserves $\odot$, is given by
\begin{align*}
O(1,3) := \left\{M \in M(4, \R) \ | \ M^{tr} \eta M = \eta \right\}, \ \ \ \eta = \mbox{diag}\{1,-1,-1,-1\}.
\end{align*}
Then $|\mbox{det} \ M| = 1,$ for $M \in O(1,3).$ Let $SO(1,3)$ be the special Lorentz group $\{M \in O(1,3) \ | \ \mbox{det} \ M = 1\}$ and denote by $SO^{+}(1,3)$ the proper orthochronous component.

\medskip

Identify $\R^{1,3}$ with the space $\mathcal{H}(2,\mathbb{C})$ of $2 \times 2$ Hermitian matrices via the linear bijection 
\begin{align*}
H : \R^{1,3} \mapsto \mathcal{H}(2,\mathbb{C}), \ \ \ \ \ \ H(t,q) = \begin{pmatrix}
    t + q_3  & q_1 + iq_2   \\
q_1 - iq_2      &   t - q_3
\end{pmatrix}.
\end{align*} 
Then $\mbox{det} \ H(t,q) = \|(t,q)\|^2.$ Thus real linear maps on $\R^{1,3}$ that preserve $\mbox{det} \ H$ are Lorentz transformations.

The special linear group 
\begin{align*}
SL(2,\mathbb{C}) = \left\{ \begin{pmatrix}
   a  & b   \\
c      &   d
\end{pmatrix}
\ | \ ad - bc = 1, a,b,c,d \in \mathbb{C}\right\}
\end{align*}
acts on $\mathcal{H}(2,\mathbb{C})$ via conjugation $H \mapsto AHA^{\star},$ $H \in \mathcal{H}(2,\mathbb{C}), A \in SL(2,\mathbb{C})$ which preserves determinant: $\mbox{det} \ AHA^{\star} = \mbox{det} \ H.$ Hence this action induces a Lorentz transformation on $\R^{1,3}$.

There exists a ``unique" homomorphism 
\begin{align*}
\Lambda : SL(2, \mathbb{C}) \mapsto SO(1,3)  
\end{align*}
such that
\begin{align} \label{Lorentz rep}
A H(t,q) A^{\star} = H (\Lambda(A)(t,q)), \ \ \ \mbox{for every} \ (t,q) \in \R^{1,3}.
\end{align}
Actually, the image of $\Lambda$ is the proper, orthochronous component $SO^{+}(1,3).$
The existence can be proved by verifying on the Pauli basis of $\mathcal{H}(2,\mathbb{C})$ or equivalently on the corresponding basis $\{(1,0,0,1), (1,0,0,-1), (0,1,1,0), (0,1, -1, 0)\}$ of $\R^{1,3}$. The homomorphism property can easily be verified through its action  
\begin{align*}
(AB)H(t,q)(AB)^{\star} = A [B H(t,q) B^{\star}]A^{\star} = A H(\Lambda(B)(t,q))A^{\star} = H(\Lambda(A)\Lambda(B)(t,q))
\end{align*}
and hence $\Lambda(AB) = \Lambda(A)\Lambda(B).$ If $A \in SL(2,\mathbb{C})$ acts trivially, then $AH(t,q)A^{\star} = H(t,q)$ for all $(t,q) \in \R^{1,3}$. This forces $A = \pm I$. Hence, $ker(\Lambda) = \{\pm I\}$ and $\Lambda$ is a $2$-$1$ covering map.

\section{Basic Technical Lemmas}\label{sbasiclemmas}

In this section, we derive several elementary, both classical and possibly new, technical computations related to the Jacobian of a conformal transformation on $\s^2.$
Let $\tau:\s^2\to \s^2$ be a conformal map. The Jacobian $\mathcal{J}_{\tau}(p)$ of $\tau$ at the point $p\in\s^2$ is given by the area distortion:
\begin{align*}
\mathcal{J}_{\tau}(p)=\lim _{r\to 0}\frac{\sigma(\tau(B(p,r)))}{\sigma(B(p,r))}, 
\end{align*}   
where $\sigma$ is the (un-normalized) surface area measure and
\begin{align*}
B(p,r)=\{q\in \s^2: d_{\s^2}(p,q)<r\},
\end{align*}
is the geodesic ball of radius $r$ centered at $p$. 
Geometrically, $B(p,r)$ is a spherical cap, and $d_{\s^2}$ denotes the geodesic distance on $\s^2$. 

\medskip

We next compute the area of a small geodesic ball. Note that 
\begin{align*}
\sigma(B(p,r))=2\pi(1-\cos r).
\end{align*}

Indeed, by rotational invariance we may assume $p = N$ is the north pole.
In spherical coordinates, the area element is $\sin\theta {\rm d}\theta {\rm d} \phi$, with $0\leq \theta\leq \pi, 0\leq \phi\leq 2\pi$.
The geodesic ball of radius $r$ centered at $N$ is $B(N,r)=\{(\sin \theta\sin \phi, \sin \theta \cos \phi,\cos\theta):0\leq \theta \leq r, 0\leq \phi\leq 2\pi\}$
so that
\begin{align*}
\sigma(B(N,r))=\int_0^{2\pi}\int_0^r \sin \theta {\rm d} \theta {\rm d}\phi = 2\pi(1-\cos r).
\end{align*}
Finally, by the Taylor expansion $\cos r=1-\frac{r^2}{2}+o(r^2)$, we obtain 
\begin{align*}
\sigma(B(p,r))=\pi r^2 +o(r^2) \ \ \ \mbox{as}\ \ \ r\to 0.
\end{align*}

\subsection{The Jacobian of a conformal map}
For a  conformal map $\tau: \s^2 \mapsto \s^2$, we denote by 
\begin{align*}
T:\R^2 \to \R^2, \ \ \ \ \ \ T(x)= \mathcal{S}\circ \tau\circ \mathcal{S}^{-1}(x)
\end{align*}
the induced map under stereographic projection. Using complex coordinates, we also regard $T$ as a map
 $T: \mathbb{C} \mapsto \mathbb{C}.$ 
 Let $\mathcal{J}_{T}(x)$ be the Jacobian of $T$ as a map from $\R^2$ to $\R^2.$ Recalling the standard relation, we have
 \begin{align*}
 \mathcal{J}_{T}(x) = |T^{\prime}(z)|^2, \ \ \ \  x = (x_1, x_2),\  z = x_1 + ix_2,
 \end{align*}
where $T^{\prime}$ is the complex derivative of $T.$
\begin{Lem}\label{Jacobian}
Let $\tau : \s^2 \to \s^2$ be a conformal map and $T$ be its stereographic representative.
Then, for every $\omega \in \s^2$, the Jacobian of $\tau$ at $\omega$ is given by 
\begin{align*}
\mathcal{J}_{\tau}(\omega) = \mathcal{J}_{T}(\mathcal{S}(\omega))\left(\frac{1+|\mathcal{S}(\omega)|^2}{1+|T(\mathcal{S}(\omega))|^2}\right)^2 = 
|T^{\prime}(\mathcal{S}(\omega))|^2\left(\frac{1+|\mathcal{S}(\omega)|^2}{1+|T(\mathcal{S}(\omega))|^2}\right)^2.
\end{align*}
\end{Lem}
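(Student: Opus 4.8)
The plan is to exploit the factorization $\tau = \mathcal{S}^{-1} \circ T \circ \mathcal{S}$ together with the multiplicativity of the area-distortion Jacobian under composition. Since all three maps are diffeomorphisms between open subsets of surfaces, the area-distortion limit defining $\mathcal{J}_{\tau}$ obeys the chain rule
\[
\mathcal{J}_{\tau}(\omega) = \mathcal{J}_{\mathcal{S}^{-1}}\big(T(\mathcal{S}(\omega))\big)\cdot \mathcal{J}_{T}(\mathcal{S}(\omega)) \cdot \mathcal{J}_{\mathcal{S}}(\omega),
\]
exactly as for ordinary Jacobian determinants. I would observe that this is immediate from the definition via the area formula: the ratio $\sigma(\tau(B(p,r)))/\sigma(B(p,r))$ factors through the intermediate images, and passing to the limit $r \to 0$ on the shrinking balls produces the product above.

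First I would record the three factors. From the explicit expression for $\mathcal{S}^{-1}$ in Section \ref{snotation}, its area-distortion factor at a point $y \in \R^2$ is $\left(\frac{2}{1+|y|^2}\right)^2$. Because $\mathcal{S}$ is the inverse diffeomorphism, its Jacobian at $\omega$ is the reciprocal of that of $\mathcal{S}^{-1}$ at $\mathcal{S}(\omega)$, namely $\mathcal{J}_{\mathcal{S}}(\omega) = \left(\frac{1+|\mathcal{S}(\omega)|^2}{2}\right)^2$. For the middle factor, $T : \mathbb{C} \to \mathbb{C}$ is holomorphic (being the stereographic representative of a conformal map of $\s^2$), so, as already recorded just before the statement, $\mathcal{J}_{T}(x) = |T^{\prime}(z)|^2$ with $z = x_1 + i x_2$.

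Then I would substitute $x = \mathcal{S}(\omega)$ and $y = T(x) = T(\mathcal{S}(\omega))$ into the chain rule and multiply:
\[
\mathcal{J}_{\tau}(\omega) = \left(\frac{2}{1+|y|^2}\right)^2 |T^{\prime}(z)|^2 \left(\frac{1+|x|^2}{2}\right)^2.
\]
The factors of $2$ cancel, leaving $|T^{\prime}(\mathcal{S}(\omega))|^2 \left(\frac{1+|\mathcal{S}(\omega)|^2}{1+|T(\mathcal{S}(\omega))|^2}\right)^2$, which is the asserted identity; rewriting $|T^{\prime}|^2$ as $\mathcal{J}_{T}$ yields the first equality simultaneously.

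The computation carries no genuine obstacle. The only point meriting a line of justification is the chain rule for the area-distortion Jacobian across maps between curved and flat domains, together with the fact that the Jacobian of the inverse of a diffeomorphism is the reciprocal of the Jacobian at the image point. Both follow cleanly from the change-of-variables/area formula applied to the composition, so once the three factors are assembled the result is a direct cancellation.
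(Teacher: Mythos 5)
Your argument is correct and rests on the same two ingredients as the paper's proof: the change-of-variables/area formula for stereographic projection and a differentiation-of-measures step to pass from ratios of areas of shrinking sets to a pointwise Jacobian. The only organizational difference is that you factor $\tau = \mathcal{S}^{-1}\circ T\circ\mathcal{S}$ and multiply three area-distortion Jacobians, whereas the paper performs a single change of variables to write $\sigma(\tau(B(p,r)))$ as an integral of $\mathcal{J}_T(y)\bigl(\tfrac{1+|y|^2}{1+|Ty|^2}\bigr)^2$ against the pulled-back spherical measure over $\mathcal{S}(B(p,r))$ and then applies the Lebesgue--Besicovitch differentiation theorem once; the two routes are equivalent, and your chain-rule step, if unpacked, requires exactly the same justification (the sets $\mathcal{S}(B(p,r))$ and $T(\mathcal{S}(B(p,r)))$ are not balls, but they shrink nicely to a point under these diffeomorphisms, so the differentiation theorem applies). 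One cosmetic caveat: writing $\mathcal{J}_T=|T'|^2$ presumes $T$ is holomorphic, which is consistent with the paper's identification of conformal maps of $\s^2$ with M\"obius transformations, but worth a word if orientation-reversing conformal maps were admitted.
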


\begin{proof} 
Fix $p \in \s^2$, by area formula,
\begin{align*}
\sigma(\tau(B(p,r))) &=\int_{\R^2} \chi_{\tau(B(p,r))}(\mathcal{S}^{-1}(x))\left(\frac{2}{1+|x|^2}\right)^2{\rm d} x\\
&=\int_{\R^2}\chi_{\mathcal{S}\circ \tau \circ \mathcal{S}^{-1}(\mathcal{S}(B(p,r)))}(x)\left(\frac{2}{1+|x|^2}\right)^2{\rm d} x\\
&=\int_{\R^2}\chi_{T(\mathcal{S}(B(p,r)))}(x)\left(\frac{2}{1+|x|^2}\right)^2{\rm d} x\\
&=\int_{T(\mathcal{S}(B(p,r)))}\left(\frac{2}{1+|x|^2}\right)^2{\rm d} x\\
&=\int_{\mathcal{S}(B(p,r))}\left(\frac{2}{1+|T(y)|^2}\right)^2\mathcal{J}_{T}(y) \ {\rm d} y\\
&=\int_{\mathcal{S}(B(p,r))}\mathcal{J}_{T}(y)\left(\frac{1+|y|^2}{1+|Ty|^2}\right)^2\left(\frac{2}{1+|y|^2}\right)^2{\rm d} y.
\end{align*}
In the second last line we used the change of variable formula.
We call $\rm d \mu=\left(\frac{2}{1+|y|^2}\right)^2 {\rm d} y$ that is a Radon measure on $\R^2$. 
Note that $\mu(\mathcal{S}(B(p,r))=\sigma(B(p,r))$ and since $\cap_{r>0}B(p,r)=\{p\}$, and $\mathcal{S}$ is 1-1 map,
$\cap_{r>0}\mathcal{S}(B(p,r))=\{\mathcal{S}(p)\}.$
As a result by Lebesgue-Besicovitch differentiation theorem, we deduce
\begin{align}\label{Jacobian Formula}
\mathcal{J}_\tau(p)=\lim_{r\to 0}\frac{\sigma(\tau(B(p,r))}{\sigma(B(p,r))}=\mathcal{J}_{T}(\mathcal{S}(p))\left(\frac{1+|\mathcal{S}(p)|^2}{1+|T(\mathcal{S}(p))|^2}\right)^2,
\end{align}
completes the proof.
\end{proof}

\begin{Rem}\label{Jacobian rem}
In complex stereographic coordinates if $\tau$ is given by the fractional linear transformation $\frac{az+b}{cz + d},$ then for every $\omega \in \s^2$
\begin{align*}
\mathcal{J}_{\tau}(\omega) = \left(\frac{1 + |z|^2}{|az + b|^2 + |cz + d|^2}\right)^2, \ \ \mbox{where}
\ z = \mathcal{S}(\omega).   
\end{align*}
 
\end{Rem}

A direct consequence of the formula \eqref{Jacobian Formula} is the following 

\begin{Cor}\label{jacobian cor}
Let $\tau: \s^2 \mapsto \s^2$ be a conformal map, $\omega \in \s^2$ and let $x = \mathcal{S}(\omega).$ 
\begin{itemize}
\item[(a)] If $\tau = \tau_{\lambda}$ is a dilation, then $\mathcal{J}_{\tau}(\omega)=\lambda^2\left( \frac{1+|x|^2}{1+\lambda^2|x|^2}\right)^2.$
\item[(b)] If $\tau = \tau_{p}$ is a translation, then $\mathcal{J}_{\tau}(\omega)=\left(\frac{1+|x|^2}{1+|x+\mathcal{S}(p)|^2}\right)^2.$
\item[(c)] If $\tau$ is either an orthogonal transformations or an inversion of $\R^2$ then $\mathcal{J}_{\tau}\equiv 1.$
\end{itemize} 
\end{Cor}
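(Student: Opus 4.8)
The plan is to verify each of the three cases by direct substitution into the Jacobian formula \eqref{Jacobian Formula} of Lemma \ref{Jacobian}, using the explicit stereographic representatives $T$ recorded in Section \ref{snotation}. Since every quantity appearing on the right-hand side of \eqref{Jacobian Formula} is expressed purely in terms of $T$ and the point $x = \mathcal{S}(\omega)$, each case collapses to a short computation of the two factors $\mathcal{J}_T(x)$ and $|T(x)|^2$.

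First I would dispatch parts (a) and (b), which are entirely routine. For the dilation $\tau_\lambda$ the representative is $T(z) = \lambda z$, so $T'(z) = \lambda$ gives $\mathcal{J}_T(x) = |T'(z)|^2 = \lambda^2$, while $|T(x)|^2 = \lambda^2|x|^2$; substituting into \eqref{Jacobian Formula} yields the claimed expression at once. For the translation $\tau_p$ the representative is $T(z) = z + \mathcal{S}(p)$, whence $\mathcal{J}_T \equiv 1$ and $|T(x)|^2 = |x + \mathcal{S}(p)|^2$, and plugging these in gives (b) immediately.

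Part (c) is the only case demanding a little care, and I expect it to be the main obstacle, because the inversion is orientation-reversing and hence not holomorphic, so the identity $\mathcal{J}_T = |T'(z)|^2$ does not apply verbatim. For an orthogonal transformation $R$ of $\R^2$ one has $\mathcal{J}_T = |\det R| = 1$ and $|T(x)| = |x|$, so both factors in \eqref{Jacobian Formula} equal $1$. For the inversion $T(x) = x/|x|^2$ I would instead compute the absolute value of the real Jacobian directly (equivalently, read it off from the anti-holomorphic form $z \mapsto 1/\bar z$ via $|\partial_{\bar z}T|^2$), obtaining $\mathcal{J}_T(x) = |x|^{-4}$ together with $|T(x)|^2 = |x|^{-2}$. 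The algebraic simplification $|x|^{-4}\bigl((1+|x|^2)/(1 + |x|^{-2})\bigr)^2 = |x|^{-4}\cdot |x|^4 = 1$ then gives $\mathcal{J}_\tau \equiv 1$.

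A more conceptual route to part (c), which sidesteps the anti-holomorphicity altogether, is to observe that both orthogonal transformations of $\R^2$ and the inversion $x \mapsto x/|x|^2$ correspond under stereographic projection to isometries of $\s^2$. Indeed, a short computation shows that $x \mapsto x/|x|^2$ is the stereographic representative of the reflection $\omega \mapsto (\omega_1, \omega_2, -\omega_3)$, and orthogonal maps of $\R^2$ correspond to rotations and reflections of $\s^2$ fixing the poles. Since isometries preserve the surface area measure $\sigma$, the area-distortion ratio defining $\mathcal{J}_\tau$ equals $1$ at every point. I would likely present the direct substitution as the main argument and remark on this isometry viewpoint as the clean explanation for why the inversion contributes no distortion.
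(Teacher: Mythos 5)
Your proposal is correct and follows essentially the same route as the paper: the paper's entire proof is the one-line observation that the inversion has real Jacobian $|x|^{-4}$, leaving the substitution into the formula of Lemma \ref{Jacobian} implicit, which is exactly the computation you carry out in detail. Your remark that the inversion is anti-holomorphic (so one must use the real Jacobian $\mathcal{J}_T$ from \eqref{Jacobian Formula} rather than $|T'|^2$) is a worthwhile clarification of a point the paper glosses over, and the isometry viewpoint is a nice aside, but neither changes the substance of the argument.
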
 
\begin{proof}
If $\tau$ is an inversion of $\R^2$ then its Jacobian is $\frac{1}{|x|^4}$, and therefore the proof follows.
\end{proof}
 
 \subsection{Computation of mass $\int_{\s^2}\mathcal{J}_{\tau}^{\frac{3}{2}}{\rm d} \omega$}
 To establish the conformal invariance of the Chang–Gui inequality \eqref{CGineq}, we first compute the mass associated with a conformal map $\tau$. For brevity, set
\begin{align*}
M_{\tau}:=\int_{\s^2}\mathcal{J}_{\tau}^{\frac{3}{2}}{\rm d} \omega.
\end{align*}

\begin{Lem} \label{mass}
We have the following identities:
\begin{itemize}
\item[(a)] If $\tau = \tau_{\lambda}$ is a dilation, then  $M_{\tau}=\frac{1+\lambda^2}{2\lambda}$.
\item[(b)] If $\tau = \tau_p$ is a translation, then $M_{\tau}=\frac{1}{2}\left(\frac{3-p_3}{1-p_3}\right)$ where $p=(p_1,p_2,p_3)\in \s^2$.
\item[(c)] If $\tau$ is an orthogonal transformation of $\R^2$ or an inversion of $\R^2$ then $M_{\tau}=1$.
\end{itemize}
\end{Lem}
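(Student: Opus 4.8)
The plan is to reduce each of the three cases to an elementary planar integral by pulling the spherical mass $M_\tau=\int_{\s^2}\mathcal{J}_\tau^{3/2}\,d\omega$ back to $\R^2$ via stereographic projection. The key tools are the identity from Section \ref{snotation},
\[
\int_{\s^2}\varphi(\omega)\,d\omega=\frac{1}{\pi}\int_{\R^2}\varphi\circ\mathcal{S}^{-1}(x)\,\frac{1}{(1+|x|^2)^2}\,dx,
\]
together with the explicit Jacobians computed in Corollary \ref{jacobian cor}. In every case the power $\frac32$ of $(1+|x|^2)^2$ in $\mathcal{J}_\tau^{3/2}$ combines with the factor $(1+|x|^2)^{-2}$ from the pullback of $d\omega$, collapsing the numerator and leaving a rational radial integrand that is handled in polar coordinates after the substitution $s=|x|^2$.

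For case (a), Corollary \ref{jacobian cor}(a) gives $\mathcal{J}_{\tau_\lambda}^{3/2}(\omega)=\lambda^3\big((1+|x|^2)/(1+\lambda^2|x|^2)\big)^3$ with $x=\mathcal{S}(\omega)$, so after the cancellation described above
\[
M_{\tau_\lambda}=\frac{\lambda^3}{\pi}\int_{\R^2}\frac{1+|x|^2}{(1+\lambda^2|x|^2)^3}\,dx
=\lambda^3\int_0^\infty\frac{1+s}{(1+\lambda^2 s)^3}\,ds,
\]
and evaluating this rational integral yields $\frac{1+\lambda^2}{2\lambda}$. Case (c) is immediate: Corollary \ref{jacobian cor}(c) states $\mathcal{J}_\tau\equiv 1$ for orthogonal transformations and inversions, so $M_\tau=\int_{\s^2}1\,d\omega=1$ because $d\omega$ is a probability measure.

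The translation case (b) is where the only genuine bookkeeping occurs. Writing $a=\mathcal{S}(p)$ and invoking Corollary \ref{jacobian cor}(b), the same cancellation produces $M_{\tau_p}=\frac{1}{\pi}\int_{\R^2}\frac{1+|x|^2}{(1+|x+a|^2)^3}\,dx$; the recentering substitution $y=x+a$ turns the numerator into $1+|y-a|^2=(1+|a|^2)+|y|^2-2\,y\cdot a$. The crucial simplification is that the linear term $-2\,y\cdot a$ integrates to zero against the radial weight $(1+|y|^2)^{-3}$ by symmetry, reducing the problem to the two standard integrals $\int_{\R^2}(1+|y|^2)^{-3}\,dy=\frac{\pi}{2}$ and $\int_{\R^2}|y|^2(1+|y|^2)^{-3}\,dy=\frac{\pi}{2}$, whence $M_{\tau_p}=1+\tfrac12|a|^2$. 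The final step, which I regard as the main (though still elementary) hurdle, is to re-express $|a|^2=|\mathcal{S}(p)|^2$ in terms of $p_3$: since $p\in\s^2$ we have $p_1^2+p_2^2=1-p_3^2$, so $|\mathcal{S}(p)|^2=(p_1^2+p_2^2)/(1-p_3)^2=(1+p_3)/(1-p_3)$, and substituting gives $M_{\tau_p}=\frac{1}{2}\frac{3-p_3}{1-p_3}$, as claimed.
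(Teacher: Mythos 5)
Your proposal is correct and follows essentially the same route as the paper: pull $M_\tau$ back to $\R^2$ via stereographic projection, insert the explicit Jacobians of Corollary \ref{jacobian cor}, exploit the cancellation of powers of $(1+|x|^2)$, recenter in the translation case (with the linear term vanishing by symmetry), and evaluate the resulting radial integrals before substituting $|\mathcal{S}(p)|^2=(1+p_3)/(1-p_3)$. All the stated integral values and the final identities check out, so there is nothing to add.
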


\begin{proof}
\begin{itemize}
\item[(a)] First we compute for $\tau_{\la}$. 
 \begin{align*}
  \int_{\s^2}\mathcal{J}_{\tau_{\la}}^{\frac{3}{2}}(\omega){\rm d} \omega&=\int_{\R^2}\mathcal{J}_{\tau_{\lambda}}(\mathcal{S}^{-1}(x))^{\frac{3}{2}}\frac{1}{\pi(1+|x|^2)^2} {\rm d} x \\
  &=\int_{\R^2}\left[\lambda^2\left(\frac{1+|x|^2}{1+\lambda^2|x|^2}\right)^2\right]^{\frac{3}{2}}\frac{1}{\pi(1+|x|^2)^2} {\rm d} x\\
  &=\frac{\lambda^3}{\pi}\int_{\R^2}\frac{1+|x|^2}{(1+\lambda^2 |x|^2)^3}{\rm d} x\\
  &=2\lambda^3 \left[\frac{1}{\la^2}\int_0^{\infty}\frac{s}{(1+s^2)^3}{\rm d}s +\frac{1}{\lambda^4}\int_0^{\infty}\frac{s^3}{(1+s^2)^3}{\rm d}x\right]\\
  %&=4\pi \lambda^3\left[\frac{1}{2\lambda^2}+\frac{1}{2\lambda^4}\right]\\
  &=\frac{1}{2}\left(\la+\frac{1}{\la}\right),
 \end{align*}
 where we used polar coordinates in the computation. 
 
  \item[(b)] Now consider  $\tau_{p}$. We set $\mathcal{S}(p)=\beta$.
\begin{align*}
\int_{\s^2}\mathcal{J}_{\tau_p}^{\frac{3}{2}}(\omega){\rm d}\omega &=\int_{\R^2}\mathcal{J}_{\tau_p}(\mathcal{S}^{-1}x)^\frac{3}{2}\frac{1}{\pi(1+|x|^2)^2} {\rm d} x\\
&=\int_{\R^2}\left(\frac{1+|x|^2}{1+|x+\beta|^2}\right)^3\frac{1}{\pi(1+|x|^2)^2} {\rm d} x\\
&=\frac{1}{\pi}\int_{\R^2}\frac{1+|y-\beta|^2}{(1+|y|^2)^3}{\rm d} y\\
&=\frac{1}{\pi}\int_{\R^2}\frac{1+|y|^2+|\beta|^2-2\beta\cdot y}{(1+|y|^2)^3}{\rm d} y\\
&=\frac{1}{\pi}\int_{\R^2}\frac{1}{(1+|y|^2)^2} dy+|\beta|^2\int_{\R^2}\frac{{\rm d}y}{(1+|y|^2)^3}\\
&=2\left[\int_0^{\infty}\frac{s}{(1+s^2)^2}+|\beta|^2\int_0^{\infty}\frac{s}{(1+s^2)^3}{\rm d} s\right]\\
&=1+\frac{1}{2}|\beta|^2,
\end{align*} 
 where again polar coordinates was used to reduce to one variable integration. Since
 \begin{align*}
 |\beta|^2=|\mathcal{S}(p)|^2=\frac{p_1^2+p_2^2}{(1-p_3)^2}=\frac{1+p_3}{1-p_3},
 \end{align*}
 the claimed identity follows.
 \item[(c)] 
This is immediate from $\mathcal{J}_{\tau}\equiv 1$. 
 \end{itemize} 
\end{proof}
\subsection{Computation of center of mass.} 

We define the center of mass (C.M.) 
\begin{align*}
a_{\tau}:=\frac{\int_{\s^2}\omega \mathcal{J}_{\tau}^{\frac{3}{2}}(\omega){\rm d}\omega}{\int_{\s^2}\mathcal{J}^{\frac{3}{2}}(\omega){\rm d}\omega}.
\end{align*}
Note that $|a_{\tau}| <1$ for every $\tau.$
\begin{Lem} \label{cm}
We have the following identities:
\begin{itemize}
\item[(a)] If $\tau = \tau_{\lambda}$ is a dilation, then  $a_{\tau}=(0,0,\frac{1-\lambda^2}{1+\lambda^2})$.
\item[(b)] If $\tau = \tau_p$ is a translation, then $a_{\tau}=\frac{1}{3-p_3}(-2p_1,-2p_2,1+p_3)$, where $p:=(p_1,p_2,p_3)\in \s^2$.
\item[(c)] If $\tau$ is either an orthogonal transformation or an inversion of $\R^2$ then $a_{\tau}=(0,0,0)$.
\end{itemize}
\end{Lem}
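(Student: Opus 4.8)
The plan is to compute the numerator $\int_{\s^2}\omega\,\mathcal{J}_\tau^{3/2}(\omega)\,{\rm d}\omega$ componentwise and then divide by the mass $M_\tau$ already obtained in Lemma \ref{mass}. The workhorse will be the stereographic transfer formula, under which the normalized measure contributes a factor $\frac{1}{\pi(1+|x|^2)^2}$; combined with the explicit Jacobians of Corollary \ref{jacobian cor} and the components of $\mathcal{S}^{-1}(x)=\frac{1}{1+|x|^2}(2x_1,2x_2,|x|^2-1)$, each component of the numerator becomes an elementary rational integral over $\R^2$, which I would evaluate in polar coordinates.

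For the dilation $\tau_\la$ in part (a), substituting $\mathcal{J}_{\tau_\la}^{3/2}=\la^3\big(\frac{1+|x|^2}{1+\la^2|x|^2}\big)^3$ produces a clean cancellation: the powers of $(1+|x|^2)$ coming from $\omega_i$, from $\mathcal{J}^{3/2}$, and from the measure collapse, leaving integrands supported only on powers of $(1+\la^2|x|^2)$. The first two components vanish by oddness in $x_1,x_2$, and the third reduces, after the substitution $u=|x|^2$, to $\la^3\int_0^\infty\frac{u-1}{(1+\la^2 u)^3}\,{\rm d}u=\frac{1-\la^2}{2\la}$. Dividing by $M_{\tau_\la}=\frac{1+\la^2}{2\la}$ yields the claimed $(0,0,\tfrac{1-\la^2}{1+\la^2})$.

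For the translation $\tau_p$ in part (b), I would set $\beta=\mathcal{S}(p)$ and perform the substitution $y=x+\beta$. After the same cancellation of $(1+|x|^2)$ factors, the horizontal components reduce to $-\beta_i\int_{\R^2}(1+|y|^2)^{-3}\,{\rm d}y$, while the vertical component hinges on the key identity $\int_{\R^2}\frac{|y|^2-1}{(1+|y|^2)^3}\,{\rm d}y=0$ (seen by writing $|y|^2-1=(1+|y|^2)-2$), leaving only the $|\beta|^2$ contribution. Thus the numerator is $(-\beta_1,-\beta_2,\tfrac12|\beta|^2)$, and dividing by $M_{\tau_p}=1+\tfrac12|\beta|^2$ gives $\frac{1}{2+|\beta|^2}(-2\beta_1,-2\beta_2,|\beta|^2)$. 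The final step is the algebraic conversion back to $p$, using $\beta_i=\frac{p_i}{1-p_3}$ and $|\beta|^2=\frac{1+p_3}{1-p_3}$, which turns $2+|\beta|^2$ into $\frac{3-p_3}{1-p_3}$ and recovers the stated formula.

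Part (c) is immediate: since $\mathcal{J}_\tau\equiv 1$, the center of mass is simply $\int_{\s^2}\omega\,{\rm d}\omega$, the barycenter of the uniform measure, which vanishes by symmetry. The main obstacle will be part (b), where one must track the change of variables carefully, recognize the vanishing of the vertical integral, and then carry out the elementary but delicate translation from the stereographic parameter $\beta$ to the sphere point $p$ so that the answer matches the stated form.
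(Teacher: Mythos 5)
Your proposal is correct and follows essentially the same route as the paper: transfer to $\R^2$ by stereographic projection, insert the explicit Jacobians from Corollary \ref{jacobian cor}, evaluate the resulting rational integrals in polar coordinates, and divide by the mass from Lemma \ref{mass}. The only cosmetic difference is that in part (b) you organize the vertical component around the identity $\int_{\R^2}\frac{|y|^2-1}{(1+|y|^2)^3}\,{\rm d}y=0$ while the paper computes the two pieces directly (and your intermediate expression for the horizontal components omits the common factor $\tfrac{2}{\pi}$, though your stated numerator $(-\beta_1,-\beta_2,\tfrac12|\beta|^2)$ and the final answer are right).
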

\begin{proof}
\begin{itemize}
\item[(a)] By Lemma \ref{mass}(a), $M_{\tau}=\frac{1+\la^2}{2\la}.$ We first evaluate
\begin{align*}
\int_{\s^2}\omega \mathcal{J}_{\tau}^{\frac{3}{2}}(\omega){\rm d}\omega &=\int_{\R^2}\mathcal{S}^{-1}(x) \mathcal{J}_{\tau}^{\frac{3}{2}}(\mathcal{S}^{-1}(x))\frac{1}{\pi(1+|x|^2)^2} dx\\
&=\frac{\la^3}{\pi}\int_{\R^2}\mathcal{S}^{-1}(x)\frac{1+|x|^2}{(1+\la^2|x|^2)^3},
\end{align*} 
and therefore the C.M. is given by
\begin{align*}
a_{\tau}=\frac{2\la^4}{\pi(1+\la^2)}\int_{\R^2}\mathcal{S}^{-1}(x)\frac{1+|x|^2}{(1+\la^2|x|^2)^3}{\rm d}x.
\end{align*}
Since $\mathcal{S}^{-1}(x)=\frac{1}{1+|x|^2}(2x_1,2x_2,|x|^2-1),$ the first two component of $a_{\tau}$ vanishes: $(a_{\tau})_1=(a_{\tau})_2=0.$ It remains to compute the last component.
\begin{align*}
(a_{\tau})_3&=\frac{2\la^4}{\pi(1+\la^2)}\int_{\R^2}\frac{|x|^2-1}{(1+\la^2|x|^2)^3}{\rm d}x\\
&=\frac{2\la^4}{\pi(1+\la^2)}\left[\frac{1}{\la^4}\int_{\R^2}\frac{|y|^2}{(1+|y|^2)^3}-\frac{1}{\la^2}\int_{\R^2}\frac{dy}{(1+|y|^2)^3}\right]\\
&=\frac{2\la^4}{(1+\la^2)}\left[\frac{1}{2\la^4}-\frac{1}{2\la^2}\right]\\
&=\frac{1-\la^2}{1+\la^2}.
\end{align*} 

\item[(b)] Now consider $\tau = \tau_{p}$, for some $p\in \s^2$. Denoting $\beta=\mathcal{S}(p) \in \mathbb{R}^2$,
\begin{align*}
\int_{\s^2}\omega \mathcal{J}_{\tau}^{\frac{3}{2}}(\omega){\rm d}\omega&=\frac{1}{\pi}\int_{\R^2} \mathcal{S}^{-1}(x)\frac{1+|x|^2}{(1+|x+\beta|^2)^3}{\rm d}x
\end{align*}
By Lemma \ref{mass}(b) $M_{\tau} =\frac{1}{2}\left(\frac{3-p_3}{1-p_3}\right)$ and therefore, the C.M. is given by
\begin{align*}
a_{\tau}=\frac{2(1-p_3)}{(3-p_3)\pi}\int_{\R^2}\frac{(2x_1,2x_2,|x|^2-1)}{(1+|x+\beta|^2)^3}{\rm d} x
\end{align*}
Now, we compute each component of $a_{\tau}$ separately. For $i=1,2$ 
\begin{align*}
\int_{\R^2} \frac{2x_i}{(1+|x+\beta|^2)^3}{\rm d}x
&=\int_{\R^2}\frac{2(y_i-\beta_i)}{(1+|y|^2)^3}{\rm d} y
=-2 \beta_i\int_{\R^2}\frac{{\rm d}y}{(1+|y|^2)^3}\\
&=-4\pi\beta_i\int_0^{\infty}\frac{s}{(1+s^2)^3}{\rm d}s
=-2\pi\beta_i\int_0^{\infty}\frac{\rm d t}{(1+t)^3}\\
&=-\frac{\pi p_i}{1-p_3}.
\end{align*}
Therefore, $(a_{\tau})_i=-\frac{2p_i}{(3-p_3)}$, for $i=1,2$. On the other hand, we have,
\begin{align*}
\int_{\R^2}\frac{|x|^2-1}{(1+|x+\beta|^2)^3}{\rm d} x
=&\int_{\R^2}\frac{|y-\beta|^2}{(1+|y|^2)^3}{\rm d} y-\int_{\R^2} \frac{{\rm d} y}{(1+|y|^2)^3}\\
=&\int_{\R^2}\frac{|y|^2}{(1+|y|^2)^3}{\rm d} y+(|\beta|^2-1)\int_{\R^2}\frac{{\rm d} y}{(1+|y|^2)^3}\\
=& \ \pi \int_0^{\infty}\frac{t{\rm d} t}{(1+t)^3}+\pi(|\beta|^2-1)\int_0^{\infty}\frac{{\rm d}t}{(1+t)^3}\\
=&\frac{\pi}{2}+\frac{\pi}{2}(|\beta|^2-1)=\frac{\pi}{2}|\beta|^2=\frac{\pi}{2}\frac{1+p_3}{1-p_3}.
\end{align*}
Therefore $(a_{\tau})_3=\frac{1+p_3}{3-p_3}.$ 
\item[(c)]  Follows from $\mathcal{J}_{\tau}\equiv 1$.
\end{itemize}
\end{proof}

As an immediate corollary, we arrive at the following identity, which will be particularly useful in our context.

\begin{Cor}\label{Nf1}%new formula 1
Let $\tau$ be one of the generators of the conformal maps: dilations, translations, orthogonal transformations and inversions of $\R^2$. Let $a$ be the C.M. of $\tau$ and define
\begin{align*}
c_{\tau}=-\frac{1}{2}\ln\int_{\s^2}\mathcal{J}_{\tau}^{\frac{3}{2}}{\rm d}\omega.
\end{align*}
Then the following relation holds
\begin{align*}
\e^{4c_{\tau}}=1-|a|^2. 
\end{align*}

\end{Cor}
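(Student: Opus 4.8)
The plan is to reduce the stated identity to a one-line algebraic check on the quantities already tabulated in Lemmas \ref{mass} and \ref{cm}, and then dispose of the three families of generators in turn. First I would unwind the left-hand side: by definition $c_{\tau} = -\frac{1}{2}\ln M_{\tau}$ with $M_{\tau} = \int_{\s^2}\mathcal{J}_{\tau}^{\frac{3}{2}}\,{\rm d}\omega$, so that $4c_{\tau} = -2\ln M_{\tau}$ and hence $\e^{4c_{\tau}} = M_{\tau}^{-2}$. Consequently the assertion $\e^{4c_{\tau}} = 1 - |a|^2$ is equivalent to the single scalar relation $M_{\tau}^2\,(1 - |a|^2) = 1$, and it suffices to verify this for each generator using the explicit values of $M_{\tau}$ and $a = a_{\tau}$ supplied by the two preceding lemmas.

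For a dilation $\tau = \tau_{\la}$, I would substitute $M_{\tau} = \frac{1+\la^2}{2\la}$ and $a = \left(0,0,\frac{1-\la^2}{1+\la^2}\right)$; then $1 - |a|^2 = 1 - \frac{(1-\la^2)^2}{(1+\la^2)^2}$, and a difference-of-squares factorization of the numerator gives $\frac{4\la^2}{(1+\la^2)^2} = M_{\tau}^{-2}$, as required. The case of an orthogonal transformation or an inversion is immediate, since there $M_{\tau} = 1$ and $a = (0,0,0)$, so both sides equal $1$.

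The only genuine computation is the translation case $\tau = \tau_p$ with $p = (p_1,p_2,p_3) \in \s^2$. Here I would insert $M_{\tau} = \frac{1}{2}\frac{3-p_3}{1-p_3}$ and $a = \frac{1}{3-p_3}(-2p_1,-2p_2,1+p_3)$, so that $|a|^2 = \frac{4(p_1^2+p_2^2)+(1+p_3)^2}{(3-p_3)^2}$. The crucial step \ --- \ and the one place where the argument uses more than bookkeeping \ --- \ is to invoke the constraint $|p| = 1$, i.e. $p_1^2 + p_2^2 = 1 - p_3^2$, which collapses the numerator to $(5-3p_3)(1+p_3)$; subtracting from $(3-p_3)^2$ then yields the perfect square $4(1-p_3)^2$, giving $1 - |a|^2 = \frac{4(1-p_3)^2}{(3-p_3)^2} = M_{\tau}^{-2}$. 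Thus the main obstacle is the mild algebraic recognition that $(3-p_3)^2 - (5-3p_3)(1+p_3) = 4(1-p_3)^2$ once the spherical constraint is used; no ingredient beyond Lemmas \ref{mass} and \ref{cm} is needed, so the corollary is essentially a consequence of those calculations.
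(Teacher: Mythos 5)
Your proposal is correct and follows essentially the same route as the paper's proof: reduce the identity to $\e^{4c_\tau}=M_\tau^{-2}$ and verify $M_\tau^2(1-|a|^2)=1$ generator by generator using the values from Lemmas \ref{mass} and \ref{cm}, with the spherical constraint $p_1^2+p_2^2=1-p_3^2$ doing the work in the translation case. The only cosmetic difference is your intermediate factorization of the numerator as $(5-3p_3)(1+p_3)$, where the paper expands directly; both land on $4(1-p_3)^2/(3-p_3)^2$.
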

\begin{proof}
The proof follows from direct verification. We first note that $\e^{4c_{\tau}}=M_{\tau}^{-2}$, where $M_{\tau}$ is the mass.

If $\tau$ is either an orthogonal transformation or an inversion, then $M_{\tau} = 1$ and $a_{\tau} = 0.$ Hence the relation holds trivially.

If $\tau = \tau_{\lambda}$ is a dilation, then  $M_{\tau}=\frac{1+\la^2}{2\la}$ and $|a_{\tau_{\la}}|^2=\left(\frac{1-\la^2}{1+\la^2}\right)^2$. From this we compute $1-|a_{\tau_{\la}}|^2=\frac{4\la^2}{(1+\la^2)^2}$. Thus the desired relation holds for dilations.

If $\tau = \tau_p$ is a translation, then $M_{\tau_{p}}=\frac{1}{2}\frac{3-p_3}{1-p_3}$ and $|a_{\tau_{p}}|^2=\left(\frac{4(p_1^2+p_2^2)+(1+p_3)^2}{(3-p_3)^2}\right).$ Hence 
\begin{align*}
1-|a_{\tau_p}|^2=\frac{(3-p_3)^2-[4(1-p_3^2)+(1+p_3)^2]}{(3-p_3)^2}=\frac{4(1+p_3^2-2p_3)}{(3-p_3)^2}=\frac{4(1-p_3)^2}{(3-p_3)^2}.
\end{align*}
Thus the identity also holds in this case.
\end{proof}

\subsection{The mass and C.M. relation of a conformal map}

\begin{Lem}\label{tauhalf}
Let $\tau : \s^2 \to \s^2$ be a conformal map of $\s^2$ and let $a \in B(0,1)$ be the C.M. of $\tau.$ Then the following relation holds
\begin{align}\label{Fundamental Requirement}
\mathcal{J}_{\tau}^{\frac{1}{2}}(\omega)=\frac{1-|a|^2}{1-a\cdot\omega}\int_{\s^2}\mathcal{J}_{\tau}^{\frac{3}{2}}(\xi){\rm d}\xi, \ \ \mbox{for every} \ \omega \in \s^2. 
\end{align} 
\end{Lem}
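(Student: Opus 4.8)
The plan is to reduce the general case to that of a single dilation, exploiting the Cartan (KAK) decomposition of the conformal group together with the covariance of both sides of \eqref{Fundamental Requirement} under composition with isometries of $\s^2$. Recall that every orientation-preserving conformal map $\tau$ can be written as $\tau = R_1 \circ \tau_\lambda \circ R_2$, where $R_1, R_2 \in SO(3)$ are rotations of $\s^2$ and $\tau_\lambda$ is a dilation; this is the KAK decomposition $SL(2,\mathbb{C}) = SU(2)\cdot A\cdot SU(2)$ transported through the identifications of Section \ref{snotation}. Note that a general sphere rotation need not be an orthogonal transformation of $\R^2$ in the sense of Corollary \ref{jacobian cor}(c); nonetheless any isometry $R \in O(3)$ preserves the area measure, so $\mathcal{J}_R \equiv 1$ directly from the definition of the Jacobian as an area-distortion limit. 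The strategy is therefore: (i) verify \eqref{Fundamental Requirement} directly for dilations, and (ii) show that its validity is preserved under pre- and post-composition with an arbitrary isometry.

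For the base case $\tau = \tau_\lambda$ I would simply substitute the already computed quantities. By Corollary \ref{jacobian cor}(a), $\mathcal{J}_{\tau_\lambda}^{1/2}(\omega) = \lambda\,\frac{1+|x|^2}{1+\lambda^2|x|^2}$ with $x = \mathcal{S}(\omega)$; using $|\mathcal{S}(\omega)|^2 = \frac{1+\omega_3}{1-\omega_3}$ this simplifies to $\frac{2\lambda}{1+\lambda^2 - (1-\lambda^2)\omega_3}$. On the other hand, Lemma \ref{mass}(a) and Lemma \ref{cm}(a) give $M_{\tau_\lambda} = \frac{1+\lambda^2}{2\lambda}$ and $a_{\tau_\lambda} = (0,0,\frac{1-\lambda^2}{1+\lambda^2})$, while Corollary \ref{Nf1} supplies $1 - |a_{\tau_\lambda}|^2 = M_{\tau_\lambda}^{-2} = \frac{4\lambda^2}{(1+\lambda^2)^2}$. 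Feeding these into the right-hand side of \eqref{Fundamental Requirement} and simplifying $1 - a_{\tau_\lambda}\cdot\omega = \frac{1+\lambda^2 - (1-\lambda^2)\omega_3}{1+\lambda^2}$ reproduces exactly $\frac{2\lambda}{1+\lambda^2 - (1-\lambda^2)\omega_3}$, so \eqref{Fundamental Requirement} holds for every dilation.

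For the covariance step, fix an isometry $R \in O(3)$. Post-composition is immediate: the chain rule and $\mathcal{J}_R \equiv 1$ give $\mathcal{J}_{R\circ\tau} = \mathcal{J}_\tau$ as functions, hence $M_{R\circ\tau} = M_\tau$, $a_{R\circ\tau} = a_\tau$, and \eqref{Fundamental Requirement} for $R\circ\tau$ is literally that for $\tau$. Pre-composition is the substantive case: the chain rule gives $\mathcal{J}_{\tau\circ R}(\omega) = \mathcal{J}_\tau(R\omega)$, and by rotational invariance of ${\rm d}\omega$ (change of variable $\xi = R\omega$) one finds $M_{\tau\circ R} = M_\tau$ and $\int_{\s^2}\omega\,\mathcal{J}_{\tau\circ R}^{3/2}\,{\rm d}\omega = R^{-1}\!\int_{\s^2}\xi\,\mathcal{J}_\tau^{3/2}\,{\rm d}\xi$, so that $a_{\tau\circ R} = R^{-1}a_\tau$ and $|a_{\tau\circ R}| = |a_\tau|$. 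Assuming \eqref{Fundamental Requirement} for $\tau$ and evaluating it at $R\omega$ yields $\mathcal{J}_{\tau\circ R}^{1/2}(\omega) = \mathcal{J}_\tau^{1/2}(R\omega) = \frac{1-|a_\tau|^2}{1 - a_\tau\cdot R\omega}\,M_\tau$; the orthogonality identity $a_\tau\cdot R\omega = R^{-1}a_\tau\cdot\omega = a_{\tau\circ R}\cdot\omega$ then converts this precisely into \eqref{Fundamental Requirement} for $\tau\circ R$.

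Combining the two covariance statements with the base case along $\tau = R_1\circ\tau_\lambda\circ R_2$ establishes \eqref{Fundamental Requirement} for all orientation-preserving conformal maps; orientation-reversing ones follow identically, since the covariance argument used only $R \in O(3)$ and never the sign of $\det R$. The step I expect to be the main obstacle — indeed the only place structural input is genuinely needed — is the rotation covariance of the center of mass, $a_{\tau\circ R} = R^{-1}a_\tau$, together with its exact compatibility with the homogeneous form $\frac{1}{1 - a\cdot\omega}$ on the right-hand side; once this is matched via $a\cdot R\omega = R^{-1}a\cdot\omega$, the proof collapses to the single explicit dilation computation. A secondary point requiring care is the clean invocation of the KAK decomposition in the geometric language of $\s^2$, which is exactly where the $SL(2,\mathbb{C})\to SO^{+}(1,3)$ correspondence of Section \ref{snotation} is used.
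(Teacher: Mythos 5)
Your proposal is correct, and it takes a genuinely different route from the paper. The paper's own proof of Lemma \ref{tauhalf} only verifies \eqref{Fundamental Requirement} for the four generating transformations (the dilation case by exactly the computation you give, plus a separate, somewhat lengthy direct computation for translations via \eqref{eq0}--\eqref{f2}), and then explicitly \emph{defers} the general case: the full statement is only recovered later as a corollary of the conformal invariance of $I_{\frac{2}{3}}$ and the uniqueness of solutions of the Euler--Lagrange equation \eqref{Euler-Lagrange Eqn} (see the remark following Lemma \ref{conf-inv}). You instead prove the lemma in full generality on the spot: the Cartan decomposition $\tau = R_1\circ\tau_\lambda\circ R_2$ reduces everything to the single dilation computation, and your two covariance steps are sound --- in particular $\mathcal{J}_{R\circ\tau}=\mathcal{J}_\tau$ and $\mathcal{J}_{\tau\circ R}=\mathcal{J}_\tau\circ R$ follow immediately from the area-distortion definition since isometries preserve $\sigma$ and map geodesic balls to geodesic balls, and the identities $M_{\tau\circ R}=M_\tau$, $a_{\tau\circ R}=R^{-1}a_\tau$, $a_\tau\cdot R\omega = a_{\tau\circ R}\cdot\omega$ are exactly what is needed to transport \eqref{Fundamental Requirement}. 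What your approach buys is a self-contained proof of the lemma as stated (including the translation case for free, absorbed into $R_1\tau_\lambda R_2$) at the cost of invoking the $KAK$ decomposition of $SL(2,\mathbb{C})$; what the paper's approach buys is that it never leaves the explicit list of generators and lets the later functional-analytic machinery do the globalization, at the cost of the lemma not actually being fully established where it is stated. Your observation that general rotations of $\s^2$ are not among the paper's listed generators (orthogonal maps of $\R^2$ correspond only to rotations fixing the poles) is accurate and is precisely why the covariance step is needed rather than a direct appeal to Corollary \ref{jacobian cor}(c).
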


\begin{proof}
We will establish the relation \eqref{Fundamental Requirement} in the case when $\tau$ belongs to one of the generating transformations, namely dilations, translations, orthogonal transformations, and inversions of $\R^2$. Although the relation can in principle be verified for any conformal map by direct computation, we shall instead follow a different approach. Once the relation \eqref{Fundamental Requirement} is checked on the generators, we will prove in the next section the conformal invariance of 
$I_{\frac{2}{3}}$. As a consequence, the minimizers of $I_{\frac{2}{3}}$ are exactly those functions obtained from conformal transformations. This, in turn, implies that they satisfy the Euler–Lagrange equation \eqref{el}, from which we will finally deduce that \eqref{Fundamental Requirement} holds for every conformal map 
$\tau$.

If $\tau$ is an orthogonal transformation or an inversion of $\R^2$, then $\mathcal{J}_{\tau}\equiv 1$ and $a=0$, so \eqref{Fundamental Requirement} is obvious. So we assume first $\tau = \tau_{\lambda}$ is a dilation. Then by Lemma \ref{mass}(a) and Lemma \ref{cm}(a) we need to verify
\begin{align*}
\mathcal{J}_{\tau}^{\frac{1}{2}}(\omega) = \frac{1 - \left(\frac{1 - \lambda^2}{1 + \lambda^2}\right)^2}{1 - \frac{1 - \lambda^2}{1 + \lambda^2}w_3} \frac{1 + \lambda^2}{2\lambda} 
= \frac{2\lambda}{(1 + \lambda^2) - (1 - \lambda^2)w_3}.
\end{align*}
In stereographic coordinates, this becomes
\begin{align*}
\mathcal{J}_{\tau}^{\frac{1}{2}}(\mathcal{S}^{-1}(x)) &=  \frac{2\lambda}{(1 + \lambda^2) - (1 - \lambda^2)\frac{|x|^2 - 1}{|x|^2 + 1}}\\
&=\frac{2\lambda (|x|^2 + 1)}{(1 + \lambda^2)(|x|^2 + 1) - (1 - \lambda^2)(|x|^2 - 1)}\\
&=\frac{\lambda(1 + |x|^2)}{1 + \lambda^2|x|^2},
\end{align*}
which follows directly by Lemma \ref{jacobian cor}(a). Now suppose $\tau = \tau_p$ be a translation. Then by Lemma \ref{cm}(b), the C.M. is 
$a = \frac{1}{3-p_3}(-2p_1,-2p_2,1+p_3)$, where $p:=(p_1,p_2,p_3)\in \s^2$. Direct computation shows
\begin{align}\label{eq0}
1 - |a|^2 &= \frac{1}{(3 - p_3)^2} \left[(3 - p_3)^2 - 4(p_1^2 + p_2^2) - (1 + p_3)^2\right] \notag \\
&= \frac{1}{(3 - p_3)^2} (4 - 8p_3 + 4p_3^2) = \frac{4(1 - p_3)^2}{(3 - p_3)^2},
\end{align}
where we have used $p_1^2 + p_2^2 = 1 - p_3^2$ in the last line. On the other hand, 
\begin{align}\label{eq1}
1 - a \cdot \omega = \frac{1}{3 - p_3}\left[3 - p_3 + 2p_1\omega_1 + 2p_2\omega_2 - (1 + p_3)\omega_3\right]. 
\end{align}

Let $\beta = \mathcal{S}(p)$ and $x = \mathcal{S}(\omega).$ Then $3 - p_3 = 2 \ \frac{|\beta|^2 + 2}{|\beta|^2 + 1},$ and  a straightforward calculation yields
\begin{align*}
2p_1\omega_1 + 2p_2\omega_2 - (1 + p_3)\omega_3 =& \ 2 \left(\frac{2\beta_1}{1 + |\beta|^2}\frac{2x_1}{1 + |x|^2} + \frac{2\beta_2}{1 + |\beta|^2}\frac{2x_2}{1 + |x|^2}\right)  \\
& \ \ - \left(1 + \frac{|\beta|^2 - 1}{|\beta|^2 + 1}\right)\frac{|x|^2 - 1}{|x|^2 + 1} \\
&= \frac{1}{(1 + |\beta|^2)(1 + |x|^2)}[8 \beta \cdot x - 2 |\beta|^2(|x|^2-1)].
\end{align*}
Therefore, 
\begin{align}\label{eq2}
 & 3 - p_3 + 2p_1\omega_1 + 2p_2\omega_2 - (1 + p_3)\omega_3 \notag\\
 =& \frac{1}{(1 + |\beta|^2)(1 + |x|^2)} \left[2(|\beta|^2+2)(1 + |x|^2) + 8 \beta \cdot x - 2 |\beta|^2(|x|^2-1) \right] \notag \\
 =& \frac{4(1 + |x+\beta|^2)}{(1 + |\beta|^2)(1 + |x|^2)}.
\end{align}
Combining \eqref{eq1} and \eqref{eq2}, we deduce 
\begin{align}\label{f1}
1 - a \cdot \omega = \frac{2(1 + |x+\beta|^2)}{(2 + |\beta|^2)(1 + |x|^2)}.
\end{align}
By Lemma \ref{jacobian cor}(b), together with \eqref{eq0} and the identity $p_3 = \frac{|\beta|^2 - 1}{|\beta|^2 + 1}$, one also obtains
\begin{align}\label{f2}
(1 - |a|^2) \int_{\s^2}\mathcal{J}_{\tau}^{\frac{3}{2}}(\xi){\rm d}\xi = 2 \ \frac{1 - p_3}{3 - p_3}  = \frac{2}{|\beta|^2 + 2}.
\end{align}
Combining \eqref{f1} and \eqref{f2} gives
\begin{align*}
\frac{1-|a|^2}{1-a\cdot\omega}\int_{\s^2}\mathcal{J}_{\tau}^{\frac{3}{2}}(\xi){\rm d}\xi = \frac{1 + |x|^2}{1 + |x + \beta|^2} = \mathcal{J}_{\tau}^{\frac{1}{2}}(\omega).
\end{align*}
This completes the proof.
\end{proof}

\subsection{ Equation satisfied by Jacobian:}

Let $\tau:\s^2\to \s^2$ be a conformal map. We set 
\begin{align*}
\psi_{\tau}(\omega)=\frac{3}{4}\ln \mathcal{J}_{\tau}(\omega)+c_{\tau}, \ \ \omega \in \s^2, 
\end{align*}
where the constant $c_{\tau}$ (as in Corollary~\ref{Nf1}) is chosen so that 
\begin{align}\label{massnorm}
\int_{\s^2}\e^{2\psi_{\tau}}{\rm d}\omega =1.
\end{align} 
This gives,
\begin{align}\label{thectau}
\e^{-2c_{\tau}}=\int_{\s^2}\mathcal{J}_{\tau}^{\frac{3}{2}}{\rm d}\omega 
\ \ \ \mbox{or, equivalently,} \ \ \
c_{\tau}=-\frac{1}{2}\ln \int_{\s^2}\mathcal{J}_{\tau}^{\frac{3}{2}}{\rm d}\omega.
\end{align}
Let $a$ be the C.M. of $\tau,$ then  
\begin{align}\label{thecom}
\int_{\s^2}we^{2\psi_{\tau}(\omega)}{\rm d}\omega =  \frac{\int_{\s^2}\omega\mathcal{J}_{\tau}^{\frac{3}{2}}{\rm d}\omega}{\int_{\s^2}\mathcal{J}_{\tau}^{\frac{3}{2}}{\rm d}\omega} = a.
\end{align}

 We claim that  $\psi_{\tau}$ satisfies the Euler-Lagrange equation 
\begin{align}\label{Euler-Lagrange Eqn}
\frac{2}{3}\Delta_{\s^2}\varphi+\frac{1-a\cdot \omega}{1-|a|^2}\e^{2\varphi}-1=0\ \ \ \mbox{on}\ \ \ \s^2.
\end{align}
Indeed, since $v = \frac{1}{2}\ln\mathcal{J}_\tau$ solves $\Delta_{\s^2} v + e^{2v} - 1 = 0,$ we get
\begin{align*}
\frac{2}{3}\Delta_{\s^2}\psi_{\tau}(\omega)-1=\Delta_{\s^2}\left(\frac{1}{2}\ln\mathcal{J}_\tau\right)(\omega)-1
= \mathcal{J}_\tau(\omega)
\end{align*}
On the other hand, by Lemma \ref{tauhalf} and \eqref{thectau}
\begin{align*}
\frac{1-a\cdot\omega}{1-|a|^2}\e^{2\psi_{\tau}(\omega)} = \frac{1-a\cdot\omega}{1-|a|^2}e^{2c_{\tau}}\mathcal{J}_\tau(\omega)^{\frac{3}{2}} = \frac{1-a\cdot\omega}{1-|a|^2}\frac{\mathcal{J}_\tau(\omega)^{\frac{1}{2}}}{\int_{\s^2}\mathcal{J}_{\tau}^{\frac{3}{2}}(\omega){\rm d}\omega}\mathcal{J}_\tau(\omega) = \mathcal{J}_\tau(\omega).
\end{align*}

We remark once again that the proof of Lemma~\ref{tauhalf} was carried out only for the generators. Accordingly, the proof of the Euler–Lagrange equation \eqref{Euler-Lagrange Eqn} should also be understood as having been established only for the generators.

%We will soon formulate a necessary and sufficient condition for $\psi_{\tau}$ to satisfy the Euler-Lagrange Equation:

\begin{Cor}\label{zero}
We have  $I_{\frac{2}{3}}(\psi_{\tau})=0$ whenever $\tau$ is a dilation, a translation, an orthogonal transformation or an inversion of $\R^2$. 
\end{Cor}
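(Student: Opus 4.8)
The plan is to evaluate the three constituent terms of $I_{\frac23}(\psi_\tau)$ separately and show that they sum to zero, using nothing beyond the identities already assembled in this section together with the classical Onofri equality. The organizing observation is to write $w:=\frac12\ln\mathcal J_\tau$, so that $\psi_\tau=\frac32 w+c_\tau$; then $e^{2w}=\mathcal J_\tau$, and because $\tau$ is a bijection of $\s^2$ the area formula gives $\int_{\s^2}e^{2w}\,{\rm d}\omega=\int_{\s^2}\mathcal J_\tau\,{\rm d}\omega=\frac1{4\pi}\sigma(\tau(\s^2))=\frac1{4\pi}\sigma(\s^2)=1$. This single normalization is what makes the argument close.

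First I would dispose of the logarithmic term. By the normalization \eqref{massnorm} the mass satisfies $\int_{\s^2}e^{2\psi_\tau}\,{\rm d}\omega=1$, while \eqref{thecom} gives $\int_{\s^2}\omega\,e^{2\psi_\tau}\,{\rm d}\omega=a$. Hence the bracket inside the logarithm equals $1-|a|^2$, so the logarithmic term of $I_{\frac23}(\psi_\tau)$ is exactly $-\frac12\ln(1-|a|^2)$.

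Next I would treat the gradient and linear terms jointly. Since $\psi_\tau=\frac32 w+c_\tau$ with $c_\tau$ constant, we have $\frac23\int_{\s^2}|\nabla_{\s^2}\psi_\tau|^2\,{\rm d}\omega=\frac32\int_{\s^2}|\nabla_{\s^2}w|^2\,{\rm d}\omega$ and $2\int_{\s^2}\psi_\tau\,{\rm d}\omega=3\int_{\s^2}w\,{\rm d}\omega+2c_\tau$. The key input is that $w$ is precisely an extremal of the classical Onofri functional, so $J_1(w)=\int_{\s^2}|\nabla_{\s^2}w|^2\,{\rm d}\omega+2\int_{\s^2}w\,{\rm d}\omega-\ln\int_{\s^2}e^{2w}\,{\rm d}\omega=0$; combined with $\int_{\s^2}e^{2w}\,{\rm d}\omega=1$ this yields the clean relation $\int_{\s^2}|\nabla_{\s^2}w|^2\,{\rm d}\omega=-2\int_{\s^2}w\,{\rm d}\omega$. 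Substituting, the gradient and linear contributions collapse to $\frac32\bigl(-2\int_{\s^2}w\bigr)+3\int_{\s^2}w+2c_\tau=2c_\tau$, so that $I_{\frac23}(\psi_\tau)=2c_\tau-\frac12\ln(1-|a|^2)$.

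Finally, Corollary~\ref{Nf1} supplies $e^{4c_\tau}=1-|a|^2$, that is $2c_\tau=\frac12\ln(1-|a|^2)$, which cancels the logarithmic term and gives $I_{\frac23}(\psi_\tau)=0$. The only mildly delicate step — and the crux of the argument — is the cancellation of the two \emph{non-explicit} quantities $\int_{\s^2}|\nabla_{\s^2}w|^2$ and $\int_{\s^2}w$: rather than compute either integral directly (feeding the Euler--Lagrange equation \eqref{Euler-Lagrange Eqn} into an integration by parts only produces the messier moment $\int_{\s^2}w\,e^{2w}$), invoking the Onofri equality at its extremal together with the area-preservation identity $\int_{\s^2}e^{2w}\,{\rm d}\omega=1$ eliminates both at once. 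Since all three ingredients — \eqref{massnorm}, \eqref{thecom}, and Corollary~\ref{Nf1} — are available precisely for the generating transformations, the computation is uniform across dilations and translations, while for orthogonal maps and inversions it degenerates to the trivial case $w\equiv0$, $c_\tau=0$, $a=0$, where $\psi_\tau\equiv0$ and $I_{\frac23}(0)=0$ directly.
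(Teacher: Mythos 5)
Your computation is correct, and it takes a genuinely different route from the paper. The paper's proof is non-computational: it observes that $\psi_{\tau}$ solves the Euler--Lagrange equation \eqref{Euler-Lagrange Eqn} under the constraints \eqref{massnorm} and \eqref{thecom}, invokes the Chang--Gui uniqueness theorem for that equation, and then identifies $\psi_{\tau}$ with the minimizer of the constrained problem, whose infimum is known to be $0$. You instead evaluate $I_{\frac{2}{3}}(\psi_{\tau})$ term by term: the normalization $\int_{\s^2}\mathcal{J}_{\tau}\,{\rm d}\omega=1$ (area preservation) together with the classical Onofri equality $J_1\bigl(\tfrac12\ln\mathcal{J}_{\tau}\bigr)=0$ gives the clean relation $\int_{\s^2}|\nabla_{\s^2}w|^2\,{\rm d}\omega=-2\int_{\s^2}w\,{\rm d}\omega$ for $w=\tfrac12\ln\mathcal{J}_{\tau}$, which makes the gradient and linear terms collapse to $2c_{\tau}$, and Corollary~\ref{Nf1} then cancels this against the logarithmic term $-\tfrac12\ln(1-|a|^2)$. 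Each step checks out, including the $\tfrac23\cdot\tfrac94=\tfrac32$ scaling of the Dirichlet energy and the observation that the cross-moment identity one would get from integrating the Euler--Lagrange equation against $w$ is not the right tool here. What your approach buys is independence from the Chang--Gui uniqueness and attainment results: it verifies $I_{\frac{2}{3}}(\psi_{\tau})=0$ by direct cancellation using only the identities of Section~\ref{sbasiclemmas} and Onofri's classical equality case, and in particular it gives an independent confirmation that the constrained infimum is attained at these functions. What the paper's approach buys is brevity and the additional structural information that $\psi_{\tau}$ is the \emph{unique} solution of \eqref{Euler-Lagrange Eqn} with the prescribed mass and center of mass, which is reused later. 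Both arguments are restricted to the generators for the same reason: \eqref{thecom} and Corollary~\ref{Nf1} are only available there at this stage.
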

\begin{proof} 
Since 
$\psi_{\tau}$ solves \eqref{Euler-Lagrange Eqn} subject to the constraints \eqref{massnorm} and \eqref{thecom}, the uniqueness theorem of Chang and Gui \cite[Proposition~2.2(ii)]{CG-2023} implies that  $\psi_{\tau}$ is the unique solution to \eqref{Euler-Lagrange Eqn}.

On the other hand, it is shown in \cite[Proposition~2.5 and the subsequent discussion]{CG-2023} that the constrained minimization problem
\begin{align*}
\inf I_{\frac{2}{3}}, \ \ \mbox{over the constraint \eqref{massnorm} and \eqref{thecom}},
\end{align*}
admits a minimizer, which solves \eqref{Euler-Lagrange Eqn}, and moreover $\inf I_{\frac{2}{3}} = 0$.
Therefore, we conclude $I_{\frac{2}{3}}(\psi_{\tau})=0.$
\end{proof}

We remark that, once the conformal invariance of $I_{\frac{2}{3}}$ is established in the next section, it will follow that
 $I_{\frac{2}{3}}(\psi_{\tau}) = 0$ for every $\tau.$

\section{The Conformal Invariance of $I_{\frac{2}{3}}$} \label{S conformal invariance}

We begin with the following lemma.

\begin{Lem}\label{fundamental}
For every conformal map $\tau : \s^2 \to \s^2,$ there exists a Lorentz transform $\Lambda_{\tau} \in SO^{+}(1,3)$ such that 
\begin{align*}
(1, \tau(\omega)) = \mathcal{J}_{\tau}(\omega)^{\frac{1}{2}} \Lambda_{\tau}(1, \omega),
\end{align*}
for every $\omega \in \s^2.$
\end{Lem}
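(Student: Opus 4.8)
The plan is to establish the identity $(1,\tau(\omega)) = \mathcal{J}_\tau(\omega)^{1/2}\,\Lambda_\tau(1,\omega)$ by exploiting the two-to-one homomorphism $\Lambda : SL(2,\mathbb{C}) \to SO^+(1,3)$ and the explicit formula for the Jacobian from Remark \ref{Jacobian rem}. The key observation is that a conformal map $\tau$ corresponds, under stereographic projection, to a Möbius transformation $z \mapsto \frac{az+b}{cz+d}$ with $ad - bc = 1$, and hence to a matrix $A = \begin{pmatrix} a & b \\ c & d \end{pmatrix} \in SL(2,\mathbb{C})$. I would take $\Lambda_\tau := \Lambda(A)$, the image of this matrix under the covering homomorphism, and then verify the stated identity pointwise in $\omega$.

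Let me see how I would set this up concretely. The natural bridge is the Hermitian-matrix picture: to the point $(1,\omega) \in \mathbb{R}^{1,3}$ (a future-pointing null vector, since $\|(1,\omega)\|^2 = 1 - |\omega|^2 = 0$ for $\omega \in \s^2$) we associate $H(1,\omega) \in \mathcal{H}(2,\mathbb{C})$, which has determinant zero and thus factors as a rank-one matrix $H(1,\omega) = v\,v^\star$ for some $v \in \mathbb{C}^2$ determined up to phase. The defining relation \eqref{Lorentz rep} gives $A\,H(1,\omega)\,A^\star = H(\Lambda(A)(1,\omega))$, so I would compute $A(v\,v^\star)A^\star = (Av)(Av)^\star$ and read off that the transformed null vector corresponds to the spinor $Av$. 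The first step, therefore, is to pin down the spinor $v$ associated to $(1,\omega)$ in stereographic coordinates: writing $\omega = \mathcal{S}^{-1}(z)$, a direct computation of $H(1,\omega)$ and its rank-one factorization yields $v$ proportional to $(z, 1)^{tr}$ (suitably normalized). The second step is to apply $A$ to get $Av \propto (az+b, cz+d)^{tr}$, form $(Av)(Av)^\star$, and identify it as $H$ applied to a scalar multiple of $(1, \tau(\omega))$.

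**The scalar factor** is exactly where the Jacobian enters, and computing it correctly is the main point of the whole lemma. Since $H$ is linear and $\det H(t,q) = \|(t,q)\|^2$, comparing determinants gives $\det\big((Av)(Av)^\star\big) = 0$, consistent with $(1,\tau(\omega))$ being null; but the genuine information is in the overall positive scalar relating $(Av)(Av)^\star = H(\lambda\cdot(1,\tau(\omega)))$ for some $\lambda = \lambda(\omega) > 0$. I would compute $\lambda$ by looking at the time component, i.e. the trace: $\mathrm{tr}\big((Av)(Av)^\star\big) = |Av|^2 = |az+b|^2 + |cz+d|^2$ (after normalizing $v$ so that $|v|^2 = 1+|z|^2$ reproduces the time component $t=1$ on the left). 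Then the claim reduces to the identity
\begin{align*}
\mathcal{J}_\tau(\omega)^{\frac{1}{2}} = \frac{1+|z|^2}{|az+b|^2+|cz+d|^2},
\end{align*}
which is precisely the square root of the formula recorded in Remark \ref{Jacobian rem}. Thus the lemma follows by matching the scalar produced by the spinor computation against the independently computed Jacobian.

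**The main obstacle** I anticipate is bookkeeping of normalizations: the spinor $v$ is defined only up to phase and scale, and one must fix its normalization so that the time component of $H^{-1}\big((Av)(Av)^\star\big)$ is read off consistently on both sides, ensuring the scalar that emerges is genuinely $\mathcal{J}_\tau^{1/2}$ and not some other power or a stray constant. A clean way to sidestep repeated recomputation is to first verify the identity on the generators (dilations, translations, rotations, inversions), using the explicit Jacobians from Corollary \ref{jacobian cor}, and then extend to arbitrary $\tau$ by the homomorphism property $\Lambda(AB) = \Lambda(A)\Lambda(B)$ together with the chain rule $\mathcal{J}_{\tau_1 \circ \tau_2}(\omega) = \mathcal{J}_{\tau_1}(\tau_2(\omega))\,\mathcal{J}_{\tau_2}(\omega)$; this cocycle structure is what makes the scalar factor compose correctly, and checking it is the conceptual heart of the argument.
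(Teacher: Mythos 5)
Your proposal is correct and follows essentially the same route as the paper: both factor $H(1,\omega)$ as a rank-one matrix built from the spinor $(z,1)^{tr}$ with the normalizing factor $\tfrac{2}{1+|z|^2}$, conjugate by the $SL(2,\mathbb{C})$ matrix $A$ representing $\tau$, read off the scalar $\tfrac{1+|z|^2}{|az+b|^2+|cz+d|^2}$ by comparing with $H(1,\tau(\omega))$, and identify it as $\mathcal{J}_{\tau}(\omega)^{1/2}$ via Remark \ref{Jacobian rem}. The paper carries out the direct pointwise computation rather than your suggested fallback of checking generators and composing, but that fallback is unnecessary here.
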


\begin{proof}
For every $w \in \mathbb{S}^2,$ we express using the stereographic projection 
\begin{align*}
w = \frac{1}{1+ |z|^2} \left (2 Re \ z, 2 Im \ z, |z|^2 - 1\right).
\end{align*}
The vector $(1,w)$ belongs to the future light cone. We express
\begin{align*}
(1, w) = \frac{1}{1+ |z|^2} \left (1 + |z|^2, 2 Re \ z, 2 Im \ z, |z|^2 - 1\right).
\end{align*}
We apply the map $H : \R^{1,3} \mapsto \mathcal{H}(2, \mathbb{C}),$ and simplify
\begin{align*}
H(1,w) = \frac{2}{1 + |z|^2} \begin{pmatrix}
   |z|^2  & z   \\
\bar z    &   1
\end{pmatrix} 
 =  \frac{2}{1 + |z|^2}  \begin{pmatrix}
   z   \\
   1
   \end{pmatrix}
   \begin{pmatrix}
   \bar z & 1  
\end{pmatrix}.
\end{align*}
As a result, for $A \in SL(2,\mathbb{C})$
\begin{align}\label{conjugation}
AH(1,w)A^{\star} =  \frac{2}{1 + |z|^2}  \left[A\begin{pmatrix}
   z   \\
   1
   \end{pmatrix}\right]
  \left[A\begin{pmatrix}
   z   \\
   1
   \end{pmatrix}\right]^{\star}.
\end{align}
 
In complex stereographic coordinates, let $\tau$ be given by the fractional linear transformation $\frac{az + b}{cz + d}$ for some $A = \begin{pmatrix} a  & b   \\
 c    &   d \end{pmatrix} \in SL(2, \mathbb{C})$. Then we have
 \begin{align*}
 &\tau \circ \mathcal{S}^{-1}(z) = \mathcal{S}^{-1}\left(\frac{az + b}{cz + d}\right)\\
 = & \frac{1}{|az + b|^2 + |cz + d|^2} \left(2 Re (az + b)\overline{(cz + d)}, 2 Im (az + b)\overline{(cz + d)}, |az + b|^2 - |cz + d|^2\right).
 \end{align*}
Now consider $(1, \tau(\omega))$ in complex stereographic coordinates and apply the map $H$ to deduce
\begin{align}\label{htau}
H(1, \tau(\omega)) = \frac{2}{|az + b|^2 + |cz + d|^2} 
\begin{pmatrix} |az + b|^2  & (az + b)\overline{cz + d} \  \\ \\
 \ \overline{(az + b)}(cz + d)   &   |cz + d|^2
  \end{pmatrix}.
\end{align}
A direct calculation shows that 
\begin{align}\label{direct}
\left[A\begin{pmatrix}
   z   \\
   1
   \end{pmatrix}\right]
  \left[A\begin{pmatrix}
   z   \\
   1
   \end{pmatrix}\right]^{\star}
   &= \begin{pmatrix}
   az + b   \\
   cz + d
   \end{pmatrix} 
   \begin{pmatrix}
   \overline{az + b} &  \overline{cz + d} 
   \end{pmatrix} \notag \\
    & = \begin{pmatrix} |az + b|^2  & (az + b)\overline{cz + d} \  \\ \\
 \ \overline{(az + b)}(cz + d)   &   |cz + d|^2
  \end{pmatrix} \notag \\
  & = \frac{|az + b|^2 + |cz + d|^2}{2} \ H(1, \tau(\omega)).
\end{align}
By the discussions in Section 2, for this $A$, there exists a Lorentz transform $\Lambda_{\tau} \in SO^{+}(1,3)$ such that \eqref{Lorentz rep} holds.
Therefore, we conclude from \eqref{conjugation}, \eqref{htau}, \eqref{direct} and \eqref{Lorentz rep}
\begin{align*}
H(1, \tau(\omega))&= \frac{1 + |z|^2}{|az + b|^2 + |cz + d|^2} \ AH(1,\omega)A^{\star} \\
&= \frac{1 + |z|^2}{|az + b|^2 + |cz + d|^2} \ H(\Lambda_{\tau}(1,\omega)).
\end{align*}
Since $H$ is one-one, we conclude 
\begin{align*}
(1, \tau(\omega))= \frac{1 + |z|^2}{|az + b|^2 + |cz + d|^2} \ (\Lambda_{\tau}(1,\omega)).
\end{align*}
By Lemma \ref{Jacobian}, and the Remark \ref{Jacobian rem} after that, we conclude that the quantity 
$\frac{1 + |z|^2}{|az + b|^2 + |cz + d|^2}$ is precisely $\mathcal{J}_{\tau}(\omega)^{\frac{1}{2}},$ completing the proof of the lemma.
\end{proof}

The following result is the main statement of this section, establishing the conformal invariance of the functional 
$I_{\frac{2}{3}}.$ For reference, we recall that 
\begin{align*}
\psi_{\tau}(\omega)=\frac{3}{4}\ln \mathcal{J}_{\tau}(\omega)+c_{\tau}, \  c_{\tau} \ \mbox{denotes the normalizing constant}.
\end{align*}

\begin{Lem}\label{conf-inv}
Let $\tau : \s^2 \to \s^2$ be a conformal map and $u \in H^1(\s^2).$ We define
\begin{align*}
u_{\tau}:=u\circ \tau +\psi_{\tau}.
\end{align*}
Then 
\begin{align*}
  I_{\frac{2}{3}}(u_{\tau})=I_{\frac{2}{3}}(u)
 \end{align*}
holds.
 \end{Lem}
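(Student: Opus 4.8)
The plan is to decompose the functional $I_{\frac{2}{3}}$ into its three constituent pieces --- the Dirichlet term, the linear term $2\int u$, and the logarithmic term --- and track how each transforms under the substitution $u \mapsto u_\tau = u\circ\tau + \psi_\tau$. Since $\psi_\tau = \frac34 \ln \mathcal{J}_\tau + c_\tau$, and the classical Onofri functional $J_1$ is already known to be invariant under $u \mapsto u\circ\tau + \frac12\ln\mathcal{J}_\tau$, my first move would be to exploit this known invariance: write $\psi_\tau = \frac12\ln\mathcal{J}_\tau + (\frac14\ln\mathcal{J}_\tau + c_\tau)$ and see whether $I_{\frac{2}{3}}$ can be related to $J_1$ plus correction terms. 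The genuinely new ingredient, however, must come from Lemma \ref{fundamental}, which encodes the transformation of $\tau$ via a Lorentz transformation $\Lambda_\tau$ acting on the future light cone.

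The key observation I would pursue is that the quantity inside the logarithm,
\begin{align*}
\left(\int_{\s^2}e^{2u}{\rm d}\omega\right)^2-\sum_{i=1}^3\left(\int_{\s^2}\omega_i e^{2u}{\rm d} \omega\right)^2,
\end{align*}
is precisely the Lorentzian quadratic form $\|\cdot\|^2$ applied to the vector
\begin{align*}
V(u):=\left(\int_{\s^2}e^{2u}{\rm d}\omega,\ \int_{\s^2}\omega_1 e^{2u}{\rm d}\omega,\ \int_{\s^2}\omega_2 e^{2u}{\rm d}\omega,\ \int_{\s^2}\omega_3 e^{2u}{\rm d}\omega\right)=\int_{\s^2}(1,\omega)\,e^{2u}{\rm d}\omega.
\end{align*}
I would then compute $V(u_\tau)$ directly. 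Using the change of variables $\omega = \tau(\xi)$ together with the transformation rule ${\rm d}\omega = \mathcal{J}_\tau(\xi){\rm d}\xi$ and $e^{2\psi_\tau} = e^{2c_\tau}\mathcal{J}_\tau^{3/2}$, the integrand acquires a factor $\mathcal{J}_\tau^{-1}$ from $e^{2u\circ\tau}$ paired against the Jacobian from the measure --- but the decisive step is to invoke Lemma \ref{fundamental} in the form $(1,\omega)=\mathcal{J}_\tau(\xi)^{-1/2}\Lambda_\tau^{-1}(1,\tau(\xi))$ (equivalently, applying $\Lambda_\tau$ pulls the vector $(1,\xi)$ forward to $\mathcal{J}_\tau^{1/2}(1,\tau(\xi))$). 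Carefully balancing these Jacobian powers should show that
\begin{align*}
V(u_\tau)=e^{2c_\tau}\,\Lambda_\tau\,V(u),
\end{align*}
up to accounting for the exact power of $\mathcal{J}_\tau$. Since $\Lambda_\tau\in SO^{+}(1,3)$ preserves the quadratic form, $\|V(u_\tau)\|^2 = e^{4c_\tau}\|V(u)\|^2$, so the logarithmic term picks up exactly the additive constant $\frac12\ln e^{4c_\tau}=2c_\tau$.

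The remaining task is bookkeeping on the other two terms. For the linear term, $2\int_{\s^2}u_\tau\,{\rm d}\omega = 2\int u\circ\tau\,{\rm d}\omega + 2\int\psi_\tau\,{\rm d}\omega$, which I would evaluate using the change of variables for the first summand and the explicit form of $\psi_\tau$ for the second. For the Dirichlet term, I would expand $\int|\nabla(u\circ\tau+\psi_\tau)|^2$ and use the conformal invariance of the Dirichlet energy under $\tau$ together with the fact that $\psi_\tau$ (being built from $\frac34\ln\mathcal{J}_\tau$) contributes a term governed by the known equation $\Delta_{\s^2}(\tfrac12\ln\mathcal{J}_\tau)+\mathcal{J}_\tau-1=0$ recalled just before Corollary \ref{zero}. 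I expect the main obstacle to lie precisely in reconciling the cross terms and the Jacobian powers across all three pieces: the individual terms will each produce stray constants and a genuine $\int\mathcal{J}_\tau$-type contribution, and these must cancel against the $2c_\tau$ generated by the logarithm so that the net change is zero. The cleanest way to force this cancellation is to first verify the identity on the generating transformations (dilations, translations, rotations, inversions), for which Corollary \ref{Nf1}, Lemma \ref{mass} and Lemma \ref{cm} supply all the explicit constants, and then extend to arbitrary $\tau$ by the group structure of the Möbius group together with the composition law $\Lambda_{\tau_1\circ\tau_2}=\Lambda_{\tau_1}\Lambda_{\tau_2}$ inherited from the homomorphism property of $\Lambda$.
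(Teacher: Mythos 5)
Your proposal is correct in outline and follows essentially the same route as the paper: recognizing the bracketed quantity as the Lorentzian norm of $V(u)=\int_{\s^2}(1,\omega)e^{2u}\,{\rm d}\omega$, transporting it with Lemma \ref{fundamental}, handling the Dirichlet and linear terms via the equation $\Delta_{\s^2}\bigl(\tfrac12\ln\mathcal{J}_\tau\bigr)+\mathcal{J}_\tau-1=0$, and proving the identity on the generators before extending by composition. Your treatment of the logarithmic term is in fact slightly cleaner than the paper's: computing $\Lambda_\tau V(u_\tau)=e^{2c_\tau}V(u)$ directly from $e^{2\psi_\tau}=e^{2c_\tau}\mathcal{J}_\tau^{3/2}$ and Lemma \ref{fundamental} avoids the detour through the center-of-mass identity of Lemma \ref{tauhalf} that the paper uses in its Steps 1--2. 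The one place where you are too optimistic is the final ``bookkeeping'': after all cancellations the residual constant is $\tfrac23\int_{\s^2}|\nabla_{\s^2}\psi_\tau|^2\,{\rm d}\omega+2\int_{\s^2}\psi_\tau\,{\rm d}\omega-2c_\tau$, and Lemma \ref{mass}, Lemma \ref{cm} and Corollary \ref{Nf1} do \emph{not} supply this --- they give $M_\tau$ and $a_\tau$ but say nothing about $\int_{\s^2}\ln\mathcal{J}_\tau\,{\rm d}\omega$ or $\|\nabla_{\s^2}\psi_\tau\|_2^2$. The vanishing of this residue is exactly the statement $I_{\frac23}(\psi_\tau)=0$ for the generators, which the paper obtains from Corollary \ref{zero} (resting on Chang--Gui's existence and uniqueness results for the constrained minimization) rather than from the explicit mass and center-of-mass computations; to close your argument you would either need to invoke that corollary or additionally evaluate $\int_{\s^2}\ln\mathcal{J}_\tau\,{\rm d}\omega$ and $\int_{\s^2}\mathcal{J}_\tau\ln\mathcal{J}_\tau\,{\rm d}\omega$ for each generator.
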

 \begin{proof}
We divide the proof into several steps. We first establish the result for the generators of the conformal group, and in the final step extend it to the entire conformal group.
 
 \medskip
 
{\bf Step 1:}
 We compute each of the terms separately.
\begin{align*}
\int_{\s^2}|\nabla_{\s^2} u_{\tau}|^2{\rm d}\omega&=\int_{\s^2}|\nabla_{\s^2}(u\circ \tau+\psi_{\tau})|^2{\rm d}\omega\\
&=\int_{\s^2}|\nabla_{\s^2}(u\circ\tau)|^2{\rm d}\omega+\int_{\s^2}|\nabla_{\s^2} \psi_{\tau}|^2{\rm d}\omega -2\int_{\s^2}(u\circ\tau) \Delta_{\s^2} \psi_{\tau}{\rm d}\omega.
\end{align*} 
Now using the equation \eqref{Euler-Lagrange Eqn} satisfied by $\psi_{\tau}$ we simplify
\begin{align*}
-2\int_{\s^2}(u\circ\tau) \Delta_{\s^2}\psi_{\tau}d\omega&=-2\cdot\frac{3}{2}\int_{\s^2}(u\circ\tau)\left[1-\frac{1-a\cdot\omega}{1-|a|^2}\e^{2\psi_\tau}\right]d\omega\\
&=-3\int_{\s^2}(u\circ\tau){\rm d}\omega +3\int_{\s^2}(u\circ\tau)\left(\frac{1-a\cdot\omega}{1-|a|^2}\right)\e^{2c_{\tau}}\mathcal{J}_{\tau}^{\frac{3}{2}}{\rm d}\omega\\
&=-3\int_{\s^2}(u\circ\tau){\rm d}\omega +3\int_{\s^2}(u\circ\tau) \mathcal{J}_{\tau}{\rm d}\omega\\
&=-3 \int_{\s^2}(u\circ\tau){\rm d}\omega+3\int_{\s^2}u{\rm d}\omega.
\end{align*}
Combining these identities, we get
\begin{align*}
\frac{2}{3}\int_{\s^2}|\nabla_{\s^2}u_{\tau}|^2\rm d\omega&=\frac{2}{3}\int_{\s^2}|\nabla_{\s^2} u|^2{\rm d}\omega +2\int_{\s^2}u\ {\rm d}\omega+\frac{2}{3}\int_{\s^2}|\nabla_{\s^2} \psi_{\tau}|^2{\rm d}\omega-2\int_{\s^2}(u\circ \tau) {\rm d}\omega.
\end{align*}
Putting it together with the identity $2\int_{\s^2}u_{\tau}{\rm d}\omega=2\int_{\s^2}(u\circ \tau){\rm d}\omega+2\int_{\s^2}\psi_{\tau}{\rm d}\omega$, we conclude
 \begin{align*}
 & \ \ \ \ \frac{2}{3}\int_{\s^2}|\nabla_{\s^2} u_{\tau}|^2{\rm d}\omega+2\int_{\s^2}u_{\tau}{\rm d}\omega \\
 &=\frac{2}{3}\int_{\s^2}|\nabla_{\s^2} u|^2{\rm d}\omega+2\int_{\s^2}u{\rm d}\omega+\int_{\s^2}|\nabla_{\s^2} \psi_{\tau}|^2{\rm d}\omega+2\int_{\s^2}\psi_{\tau}{\rm d}\omega\\
&=\frac{2}{3}\int_{\s^2}|\nabla_{\s^2} u|^2{\rm d}\omega+2\int_{\s^2}u{\rm d}\omega+\frac{1}{2}\ln(1-|a|^2)
 \end{align*}
 where in the last step we used the fact that $I_{\frac{2}{3}}(\psi_{\tau})=0$ for the generators, which follows from Corollary \ref{zero} together with the constraints \eqref{massnorm} and \eqref{thecom}.
Now we integrate the exponential terms,
\begin{align*}
\int_{\s^2}\e^{2u_{\tau}}{\rm d}\omega&=\int_{\s^2}\e^{2(u\circ\tau)+2\psi_{\tau}}{\rm d}\omega\\
&=\int_{\s^2}\e^{2(u\circ\tau)+\frac{3}{2}\ln \mathcal{J}_{\tau}+2c_{\tau}}{\rm d}\omega\\
&=\int_{\s^2}\e^{2(u\circ\tau)}\mathcal{J}_{\tau}\mathcal{J}_{\tau}^{\frac{1}{2}}\e^{2c_{\tau}}{\rm d}\omega\\
&=\int_{\s^2}\e^{2(u\circ\tau)} \frac{1-|a|^2}{1-a\cdot\omega}\mathcal{J}_{\tau}{\rm d} \omega\\
&=\int_{\s^2}\e^{2u} \frac{1-|a|^2}{1-a\cdot\tau^{-1}(\omega)}{\rm d} \omega.
\end{align*} 
Similarly,
\begin{align*}
\int_{\s^2}\omega\e^{2u_{\tau}}\rm d\omega&=\int_{\s^2}\omega \e^{2(u\circ\tau)}\mathcal{J}_{\tau}\frac{1-|a|^2}{1-a\cdot\omega}{\rm d}\omega\\
&=\int_{\s^2}\tau^{-1}(\omega) \e^{2u}\frac{1-|a|^2}{1-a\cdot\tau^{-1}(\omega)}{\rm d}\omega.
\end{align*}
Therefore, 
\begin{align*}
I_{\frac{2}{3}}(u_{\tau})&=\frac{2}{3}\int_{\s^2}|\nabla_{\s^2} u|^2{\rm d}\omega +2\int_{\s^2} u {\rm d}\omega+\frac{1}{2}\ln(1-|a|^2)\\
&\quad-\frac{1}{2}\ln\left[\left(\int_{\s^2}\frac{1-|a|^2}{1-a \cdot \tau^{-1}(\omega)}\e^{2u}{\rm d}\omega\right)^2-\left|\int_{\s^2}\tau^{-1}(\omega)\frac{1-|a|^2}{1-a \cdot \tau^{-1}(\omega)}\e^{2u}{\rm d}\omega\right|^2\right].
\end{align*}

{\bf Step 2:} Now we use the previous lemma to observe an invariance for the term within the logarithmic term.

Recall that by Lemma \ref{tauhalf} (known to be true for generators)
\begin{align*}
\mathcal{J}_{\tau}^{\frac{1}{2}}(\omega)&=\frac{1-|a|^2}{1-a\cdot\omega}\e^{-2c_{\tau}}
=\frac{(1-|a|^2)}{1-a\cdot\omega}\int_{\s^2}\mathcal{J}_{\tau}^{\frac{3}{2}}{\rm d}\omega.
\end{align*}
and by Lemma \ref{fundamental}, there exists a Lorentz transform $\Lambda_{\tau}$ such that
\begin{align*} 
(1,\tau(\omega))&=\mathcal{J}_{\tau}^{\frac{1}{2}}(\omega)\Lambda_{\tau}(1,\omega)=e^{-2c_{\tau}}\frac{(1-|a|^2)}{1-a\cdot \omega}\Lambda_{\tau}(1,\omega),
\end{align*}
or, equivalently,
\begin{align} \label{anidentity}
(1,\omega)=e^{-2c_{\tau}}\frac{(1-|a|^2)}{1-a\cdot\tau^{-1}(\omega)}\Lambda_{\tau}(1,\tau^{-1}(\omega)).
\end{align}
Multiplying \eqref{anidentity} by $e^{2u}$ and integrating  we get
\begin{align*}
\int_{\s^2}(1,\omega)\e^{2u}{\rm d}\omega =\e^{-2c_{\tau}}(1-|a|^2)\Lambda_{\tau}\left(\int_{\s^2}\frac{(1,\tau^{-1}(\omega))}{1-a\cdot\tau^{-1}(\omega)}\e^{2u}{\rm d}\omega\right).
\end{align*}
Since $\Lambda_{\tau}\in SO(3,1)$, it preserves the Lorentzian
\begin{align*}
 &\left(\int_{\s^2}\e^{2u}{\rm d}\omega\right)^2-\left|\int_{\s^2}\omega\e^{2u}{\rm d}\omega\right|^2
 \\
 &=\e^{-4c_{\tau}}(1-|a|^2)^2\left[\left(\int_{\s^2}\frac{1}{1-a \cdot \tau^{-1}(\omega)}\e^{2u}{\rm d}\omega\right)^2-\left|\int_{\s^2}\tau^{-1}(\omega)\frac{1}{1-a \cdot \tau^{-1}(\omega)}\e^{2u}{\rm d}\omega\right|^2\right]
\\
&=(1-|a|^2)\left[\left(\int_{\s^2}\frac{1}{1-a \cdot \tau^{-1}(\omega)}\e^{2u}{\rm d}\omega\right)^2-\left|\int_{\s^2}\tau^{-1}(\omega)\frac{1}{1-a \cdot \tau^{-1}(\omega)}\e^{2u}{\rm d}\omega\right|^2\right]. 
 \end{align*} 
In the last line, we used Corollary \ref{Nf1} which states that $(1-|a|^2)=e^{4c_{\tau}}$ for the generators.

\medskip

{\bf Step 3:}
By combining Step 1 and Step 2, we have established the conformal invariance for the generators:
\begin{align*}
I_{\frac{2}{3}}(u_{\tau})&=\frac{2}{3}\int_{\s^2}|\nabla_{\s^2} u|^2{\rm d}\omega +2\int_{\s^2}u{\rm d}\omega+\frac{1}{2}\ln(1-|a|^2)\\
&\quad-\frac{1}{2}\ln\left[\left(\int_{\s^2}\frac{1-|a|^2}{1-a\tau^{-1}(\omega)}\e^{2u}{\rm d}\omega\right)^2-\left|\int_{\s^2}\tau^{-1}(\omega)\frac{1-|a|^2}{1-a\tau^{-1}(\omega)}\e^{2u}{\rm d}\omega\right|^2\right]\\
&=\frac{2}{3}\int_{\s^2}|\nabla_{\s^2} u|^2{\rm d}\omega+2\int_{\s^2} u{\rm d}\omega -
\frac{1}{2}\ln\left[\left(\int_{\s^2}\e^{2u}{\rm d}\omega\right)^2-\left|\int_{\s^2}\omega\e^{2u}{\rm d}\omega\right|^2\right]\\
&=I_{\frac{2}{3}}(u).
\end{align*}
{\bf Step 4:}
If $\tau$ is a conformal map of $\s^2$, then by Liouville's theorem it can be expressed as a finite composition of the generating maps: dilations, translations, orthogonal transformations, and inversions. By chain rule we have $\psi_{\tau_1\circ\tau_2}=\psi_{\tau_1} \circ \tau_2+\psi_{\tau_2}+ c$ where c is a constant and since $I_{\frac{2}{3}}$ is invariant under the addition of constants
\begin{align*}
I_{\frac{2}{3}}(u_{\tau_1\circ\tau_2})&=I_{\frac{2}{3}}(u(\tau_1\circ\tau_2)+\psi_{\tau_1\circ\tau_2})\\
&=I_{\frac{2}{3}}((u \circ \tau_1)\circ\tau_2+\psi_{\tau_1}\circ \tau_2+\psi_{\tau_2}+ c)\\
&=I_{\frac{2}{3}}(u \circ \tau_1+\psi_{\tau_1})=I_{\frac{2}{3}}(u).
\end{align*}
Therefore, by iteratively applying the above observation to each factor in the decomposition of $\tau$, we obtain the desired conclusion.
 \end{proof}

\begin{Rem}
As a direct consequence of Lemma \ref{conf-inv}, by taking $u \equiv 0$, we obtain $I_{\frac{2}{3}}(\psi_{\tau}) = 0$ for every conformal map $\tau.$ Moreover, if $a$ denotes the center of mass of $e^{2\psi{\tau}},$ then $\psi_{\tau}$ is a solution of equation \eqref{Euler-Lagrange Eqn}. Consequently, Corollary \ref{tauhalf} holds for all conformal maps $\tau$.
\end{Rem}

\section{Classification of Extremal Set and Stability}\label{sstability}

In this section, we establish a complete classification of the extremizers of $I_{\frac{2}{3}}$ in terms of the conformal group of 
$\s^2$. It is immediate that the inequality admits a minimizer, since substituting $u \equiv 0$ yields equality. However, Chang and Gui \cite{CG-2023}, in the course of proving the inequality, demonstrated that for any admissible choice of center of mass, one can construct a nontrivial extremizer. This naturally raises the question of whether the family of extremizers forms a three-dimensional manifold, modulo the trivial freedom of adding constants, which merely rescales the mass $\int_{\s^2} e^{2u} {\rm d}\omega$.

In what follows, building on the work of Chang and Gui, we provide a complete classification of the extremizers, showing that the set of extremizers admits a natural three-dimensional parametrization (modulo additive constants), arising from the action of the conformal group on $\mathbb{S}^2$. This classification clarifies the precise geometric nature of the extremizers and highlights the central role of conformal invariance in the problem.

\begin{Th}\label{Classification}
Let $u \in H^1(\s^2).$ Then 
\begin{align*}
I_{\frac{2}{3}}(u)=0 \ \ \ \ \ \  \mbox{if and only if} \  \ \ \ \ \ u=\frac{3}{4}\ln \mathcal{J}_{\tau}+ c,
\end{align*}
where $\tau$ is a conformal transformation of $\mathbb{S}^2$, $\mathcal{J}_{\tau}$ denotes its Jacobian, and $c \in \mathbb{R}$ is an arbitrary constant.
\end{Th}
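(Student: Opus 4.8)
The plan is to prove the theorem by establishing the two implications separately, leaning heavily on the machinery already developed. The ``if'' direction is essentially free: if $u = \frac{3}{4}\ln\mathcal{J}_\tau + c = \psi_\tau + (c - c_\tau)$ for some conformal $\tau$, then since $I_{\frac{2}{3}}$ is invariant under the addition of constants, $I_{\frac{2}{3}}(u) = I_{\frac{2}{3}}(\psi_\tau)$, and by the Remark following Lemma~\ref{conf-inv} (taking $u\equiv 0$ in the conformal invariance) we have $I_{\frac{2}{3}}(\psi_\tau) = 0$ for \emph{every} conformal map $\tau$. So this direction reduces to a one-line citation of the already-proven conformal invariance.

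The substantive content is the ``only if'' direction: I must show that every minimizer arises this way. First I would reduce to a normalized problem. Given $u$ with $I_{\frac{2}{3}}(u) = 0$, I may add a constant so that $\int_{\s^2} e^{2u}\,{\rm d}\omega = 1$, which leaves the deficit unchanged. Let $a = \int_{\s^2}\omega\, e^{2u}\,{\rm d}\omega$ be the center of mass; since $e^{2u}$ is a probability density on $\s^2$ that is not a point mass, $|a| < 1$, so $a \in B(0,1)$. The key step is now to exploit conformal invariance to move the center of mass to the origin: I would invoke a transitivity fact — the conformal group acts transitively on $B(0,1)$ via the center-of-mass action — to select a conformal map $\sigma$ (a suitable dilation/translation) such that the transformed function $v := u_\sigma = u\circ\sigma + \psi_\sigma$ has center of mass $\int_{\s^2}\omega\,e^{2v}\,{\rm d}\omega = 0$. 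By Lemma~\ref{conf-inv}, $I_{\frac{2}{3}}(v) = I_{\frac{2}{3}}(u) = 0$, and $v$ still satisfies $\int_{\s^2}e^{2v}\,{\rm d}\omega = 1$.

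With the center of mass at the origin, $v$ is a minimizer of $I_{\frac{2}{3}}$ over the constraint set $\mathcal{M}_0 = \{w : \int e^{2w} = 1,\ \int \omega e^{2w} = 0\}$, hence solves the Euler--Lagrange equation~\eqref{el} with $a = 0$, namely $\frac{2}{3}\Delta_{\s^2}v + e^{2v} - 1 = 0$. Here I would cite the uniqueness result of Chang--Gui \cite[Prop.~2.2(ii)]{CG-2023}: for each admissible center of mass the solution of \eqref{el} is unique, and for $a=0$ one checks directly that $v\equiv 0$ is a solution, whence $v \equiv 0$ (up to the normalization already fixed). Unwinding the definition $v = u\circ\sigma + \psi_\sigma \equiv 0$ gives $u\circ\sigma = -\psi_\sigma$, so $u = -\psi_\sigma\circ\sigma^{-1} = -\big(\tfrac{3}{4}\ln\mathcal{J}_\sigma + c_\sigma\big)\circ\sigma^{-1}$. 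Finally, the chain-rule / cocycle identity for Jacobians, $\mathcal{J}_\sigma(\sigma^{-1}(\omega))\,\mathcal{J}_{\sigma^{-1}}(\omega) = 1$ (equivalently the relation $\psi_{\tau_1\circ\tau_2} = \psi_{\tau_1}\circ\tau_2 + \psi_{\tau_2} + c$ used in Step~4 of Lemma~\ref{conf-inv}), lets me rewrite this as $u = \frac{3}{4}\ln\mathcal{J}_{\sigma^{-1}} + c'$ for the conformal map $\tau := \sigma^{-1}$ and a constant $c'$, which is exactly the claimed form.

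The main obstacle I anticipate is the transitivity step — justifying that one can always choose a conformal $\sigma$ driving the center of mass of $e^{2v}$ to zero. This requires knowing that the map $\tau \mapsto a_\tau$ (or more precisely the induced action on centers of mass) surjects onto $B(0,1)$; the explicit formulas in Lemma~\ref{cm} for dilations and translations make this plausible and computable, but care is needed to confirm the action is onto all of $B(0,1)$ and to track how the center of mass transforms under $u\mapsto u_\sigma$ (the transformation rule for $\int\omega e^{2u_\tau}$ derived in Step~1 of Lemma~\ref{conf-inv} should supply exactly this). A secondary subtlety is the bookkeeping of additive constants and the direction of the conformal map ($\sigma$ versus $\sigma^{-1}$) in the final identification; these are routine given the cocycle identity but must be stated carefully so that the arbitrary constant $c$ in the statement is genuinely free.
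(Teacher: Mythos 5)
Your proposal is correct in substance and reaches the theorem, but the decisive step is argued along a genuinely different route from the paper. Both proofs share the same skeleton: the ``if'' direction via $I_{\frac{2}{3}}(\psi_\tau)=0$ and invariance under constants, and, for ``only if'', the conformal recentering of $e^{2u}$ --- your ``transitivity'' step is exactly the paper's Lemma~\ref{Zero C.O.M.}, proved there by the translation formula plus a continuity/intermediate-value argument in the dilation parameter $\lambda_0$, so your worry about surjectivity onto $B(0,1)$ is resolved precisely as you anticipate. Where you diverge is in identifying the recentered minimizer $v=u_\sigma$: you pass to the constrained Euler--Lagrange equation \eqref{el} with $a=0$ and invoke the Chang--Gui uniqueness theorem to get $v\equiv 0$, whereas the paper observes that once $\int_{\s^2}\omega e^{2v}\,{\rm d}\omega=0$ the functional $I_{\frac{2}{3}}(v)$ coincides with the Aubin functional $J_{\frac{2}{3}}(v)$, and then the Gui--Moradifam inequality $J_{\frac{1}{2}}\ge 0$ on the zero-center-of-mass class gives $J_{\frac{2}{3}}(v)\ge \frac{1}{6}\int_{\s^2}|\nabla_{\s^2}v|^2\,{\rm d}\omega$, forcing $v$ to be constant. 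The paper's route has the advantage of being exactly the computation that powers the stability estimate of Theorem~\ref{main} (the classification is the equality case of that bound), while yours trades this for the external uniqueness input \cite[Prop.~2.2(ii)]{CG-2023}, which the paper itself uses elsewhere (Corollary~\ref{zero}), plus the routine but not entirely free verification that an arbitrary minimizer over $\mathcal{M}_0$ satisfies \eqref{el} with the multipliers in that precise normalized form. Two small bookkeeping points: the claim that $v=u_\sigma$ ``still satisfies $\int_{\s^2}e^{2v}\,{\rm d}\omega=1$'' is not automatic (the transformation rule in Step~1 of Lemma~\ref{conf-inv} shows the mass changes by a factor involving $\frac{1-|a|^2}{1-a\cdot\tau^{-1}(\omega)}$), so you must renormalize by a constant before applying the uniqueness theorem --- harmless, since adding a constant preserves both the deficit and the vanishing of the center of mass; and the final unwinding via $\psi_\sigma\circ\sigma^{-1}=-\psi_{\sigma^{-1}}+\mbox{const}$ is exactly the identity the paper uses.
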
 

As a direct consequence of Theorem \ref{Classification}, the set of extremizers, up to an addition of a constant, is given by
\begin{align*}
\mathcal{M}_{\mbox{\tiny{ext}}}=\left\{\frac{3}{4}\ln \mathcal{J}_{\tau}+ c_{\tau} \ | \ \tau : \s^2 \to \s^2 \ \mbox{conformal map} \right\}.
\end{align*}

It is worthwhile emphasizing that the subgroup of conformal maps arising from orthogonal transformations $O(3)$ of $\R^3$
 leaves the functional $I_{\frac{2}{3}}$ invariant. Since the orthogonal group $O(3)$ has dimension three, these symmetries merely reproduce the trivial extremizers (constant solutions). In contrast, the non-trivial extremizers emerge precisely from the additional conformal symmetries of $\s^2$, captured by the full action of $PSL(2,\mathbb{C}).$

The proof of the theorem follows the same line of reasoning as Onofri \cite{O-1982}. To carry out the argument we require a preparatory lemma which asserts that, for any admissible function, there exists a conformal change that moves its center of mass to the origin (see also the variational proof in \cite{Car-2025}).

Recall the notation 
\begin{align*}
\psi_{\tau}:=\frac{3}{4}\ln\mathcal{J}_{\tau}+c_{\tau}, \ \ \ \ \mbox{and} \ \ \ \ u_{\tau} = u \circ \tau + \psi_{\tau},
\end{align*}
where $c_{\tau}$ is the normalizing constant chosen so that the mass constraint $\int_{\s^2}e^{2\psi_{\tau}} {\rm d}\omega = 1$ (cf. \eqref{massnorm}) holds.

\begin{Lem}\label{Zero C.O.M.}
For $u\in H^1(\s^2)$, there exist a conformal map $\tau: \s^2 \to \s^2$ such that 
\begin{align*}
\int_{\s^2}\omega\e^{2 u_{\tau}}d\omega=0.
\end{align*}
\end{Lem}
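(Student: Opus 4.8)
The plan is to reduce the statement to a purely Lorentzian fact, exploiting the correspondence between conformal maps of $\s^2$ and elements of $SO^{+}(1,3)$ developed in Section \ref{snotation} and Lemma \ref{fundamental}. To this end I first encode the mass and center of mass of $e^{2u}\,{\rm d}\omega$ in a single Minkowski $4$-vector
\begin{align*}
\mathbf{m} := \int_{\s^2}(1,\omega)\,e^{2u}\,{\rm d}\omega = \left(\int_{\s^2}e^{2u}\,{\rm d}\omega,\ \int_{\s^2}\omega\,e^{2u}\,{\rm d}\omega\right).
\end{align*}
Its time component is strictly positive, and since $e^{2u}\,{\rm d}\omega$ is absolutely continuous, hence not a point mass, the strict inequality $\left|\int_{\s^2}\omega\,e^{2u}\,{\rm d}\omega\right| < \int_{\s^2}e^{2u}\,{\rm d}\omega$ holds; thus $\mathbf{m}$ is a future-directed timelike vector (this is precisely the positivity of the argument of the logarithm in $I_{\frac{2}{3}}$).

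The key computation is the transformation law for this vector under $u \mapsto u_{\tau}$. Writing $e^{2u_{\tau}(\omega)} = e^{2u(\tau\omega)}\mathcal{J}_{\tau}(\omega)^{3/2}e^{2c_\tau}$, performing the change of variables $\zeta=\tau(\omega)$ (so that ${\rm d}\omega = \mathcal{J}_{\tau}(\omega)^{-1}{\rm d}\zeta$ and the Jacobian powers combine to $\mathcal{J}_{\tau}(\tau^{-1}\zeta)^{1/2} = \mathcal{J}_{\tau^{-1}}(\zeta)^{-1/2}$), and applying Lemma \ref{fundamental} to $\tau^{-1}$, namely $(1,\tau^{-1}(\zeta)) = \mathcal{J}_{\tau^{-1}}(\zeta)^{1/2}\Lambda_{\tau^{-1}}(1,\zeta)$, I expect to obtain
\begin{align*}
\int_{\s^2}(1,\omega)\,e^{2u_{\tau}}\,{\rm d}\omega = e^{2c_\tau}\,\Lambda_{\tau^{-1}}\,\mathbf{m} = e^{2c_\tau}\,\Lambda_{\tau}^{-1}\,\mathbf{m},
\end{align*}
the last equality using that $\tau \mapsto \Lambda_{\tau}$ is a homomorphism. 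In particular, the spatial part $\int_{\s^2}\omega\,e^{2u_{\tau}}\,{\rm d}\omega$ vanishes exactly when $\Lambda_{\tau}^{-1}\mathbf{m}$ is a positive multiple of the time axis $(1,0,0,0)$.

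It remains to realize such a $\tau$. Since $SO^{+}(1,3)$ acts transitively on each future hyperboloid $\{t^2 - |q|^2 = \text{const} > 0,\ t>0\}$ — concretely, rotate the spatial part of $\mathbf{m}$ onto the $x_3$-axis and then apply a boost in that direction — there is $\Lambda_0 \in SO^{+}(1,3)$ with $\Lambda_0\,\mathbf{m} = \kappa\,(1,0,0,0)$ for some $\kappa>0$. By the surjectivity of $\Lambda : SL(2,\mathbb{C}) \to SO^{+}(1,3)$ recorded in Section \ref{snotation}, together with the identification of conformal maps with $PSL(2,\mathbb{C})$, I can then choose a conformal map $\tau$ with $\Lambda_{\tau}^{-1} = \Lambda_0$. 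For this $\tau$ the displayed transformation law yields $\int_{\s^2}\omega\,e^{2u_{\tau}}\,{\rm d}\omega = 0$, as desired. The main obstacle is not analytic but bookkeeping: carefully justifying the transformation law (the cancellation of the Jacobian powers and the correct appearance of $\Lambda_{\tau^{-1}} = \Lambda_{\tau}^{-1}$) and confirming that the strict timelike-ness of $\mathbf{m}$ never degenerates. An alternative, more topological route would parametrize the generators by their center of mass $a \in B(0,1)$ (Lemma \ref{cm}), define the continuous vector field $a \mapsto \int_{\s^2}\omega\,e^{2u_{\tau_a}}\,{\rm d}\omega$, and invoke a Brouwer-degree/Hersch argument after analyzing the boundary concentration as $|a| \to 1$; but the Lorentzian argument above is cleaner and relies directly on the structures already assembled.
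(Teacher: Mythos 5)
Your argument is correct, and it takes a genuinely different route from the paper. The paper works in stereographic coordinates with the two-parameter family $\tau(\omega)=\mathcal{S}^{-1}(\lambda_0\mathcal{S}(\omega)+x_0)$: the conditions on the first two components of the center of mass determine $x_0$ explicitly, and the third component is handled by an intermediate value argument in $\lambda_0$ (the integral tends to $+\infty$ as $\lambda_0\to 0^+$ and to $-\infty$ as $\lambda_0\to\infty$). You instead package mass and center of mass into the future-directed timelike vector $\mathbf{m}=\int_{\s^2}(1,\omega)e^{2u}\,{\rm d}\omega$, derive the transformation law $\int_{\s^2}(1,\omega)e^{2u_{\tau}}\,{\rm d}\omega=e^{2c_{\tau}}\Lambda_{\tau}^{-1}\mathbf{m}$, and conclude by transitivity of $SO^{+}(1,3)$ on future hyperboloids together with the surjectivity of $\Lambda:SL(2,\mathbb{C})\to SO^{+}(1,3)$. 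I checked the transformation law: with $e^{2u_{\tau}}=e^{2(u\circ\tau)}\mathcal{J}_{\tau}^{3/2}e^{2c_{\tau}}$, the substitution $\zeta=\tau(\omega)$ leaves exactly $\mathcal{J}_{\tau}(\tau^{-1}\zeta)^{1/2}=\mathcal{J}_{\tau^{-1}}(\zeta)^{-1/2}$, which cancels against Lemma \ref{fundamental} applied to $\tau^{-1}$, and $\Lambda_{\tau^{-1}}=\Lambda_{\tau}^{-1}$ is legitimate since $\Lambda$ is a homomorphism constant on $\{\pm A\}$; your strictness claim $\bigl|\int_{\s^2}\omega e^{2u}{\rm d}\omega\bigr|<\int_{\s^2}e^{2u}{\rm d}\omega$ also holds because equality would force $e^{2u}\,{\rm d}\omega$ to concentrate on a single point. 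What your approach buys: it avoids any case analysis or continuity/IVT step, it only uses Lemma \ref{fundamental} (proved for arbitrary M\"obius maps) rather than the generator-by-generator computations, and it yields for free the uniqueness of $\tau$ modulo the stabilizer $SO(3)$ of the time axis, which is exactly the content of the paper's subsequent Remark. What the paper's route buys: it is elementary and self-contained (it does not invoke the surjectivity of $\Lambda$, which the paper asserts without proof), and it produces the explicit formulas for $x_0$ and $\lambda_0$ that are reused in the Remark following the lemma. The only cosmetic gap is that you phrase the transformation law as something you ``expect to obtain''; writing out the two-line change of variables would close it.
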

\begin{proof}
Via stereographic projection $\mathcal{S}$, we reduce the problem to $\R^2$. We consider conformal maps given by a translation by $x_0 \in \R^2$ followed by a dilation $\lambda_0 >0$. Together, these transformations provide the three degrees of freedom needed to adjust the center of mass to the origin.

For brevity, we denote this specific conformal map by
 $\tau(\omega):=\mathcal{S}^{-1}(\lambda \mathcal{S}(\omega)+x_0)$ and the associated extremizer by $\psi_{\tau}$. In stereographic coordinates, this takes the explicit form
\begin{align*}
\psi_{\tau}(\mathcal{S}^{-1}(x)):=\frac{3}{4}\ln\frac{\lambda_0^2(1+|x|^2)^2}{(1+|\lambda_0x+x_0|^2)^2}+c_{\tau}.
\end{align*}

We then consider the transformed function $u\circ\tau+\psi_{\tau}.$ For $i=1,2$, we expand and use change of variable,
\begin{align*}
\int_{\s^2}\omega_i \e ^{2 u_{\tau}}{\rm d} x &=2\e^{2 c_{\tau}}\int_{\R^2} x_i \e^{2u(\mathcal{S}^{-1}(\lambda_0 x + x_0))}\frac{\lambda_0^3}{(1+|\lambda_0 x+x_0|^2)^3}{\rm d}x\\
&=2\e^{2 c_{\tau}}\int_{\R^2}(x-x_0)_i \e^{2u\circ \mathcal{S}^{-1}(x)}\frac{\rm dx}{(1+|x|^2)^3}.
\end{align*}
Equating it to zero yields the constraint,
\begin{align*}
x_0={\left(\int_{\R^2}\frac{\e^{2u\circ \mathcal{S}^{-1}(x)}}{(1+|x|^2)^3}{\rm d}x\right)^{-1}}{\int_{\R^2}\frac{x}{(1+|x|^2)^3}\e^{2u\circ \mathcal{S}^{-1}(x)}{\rm d} x }.
\end{align*}
On the other hand,
\begin{align*}
\int_{\s^2}\omega_3 e^{2u_{\tau}}{\rm d} \omega &= 2\e^{2 c_{\tau}}\int_{\R^2} (|x|^2 -1)\e^{2u(\mathcal{S}^{-1}(\lambda_0 x + x_0))}\frac{\lambda_0^3}{(1+|\lambda_0 x+x_0|^2)^3}{\rm d}x\\
&=2\e^{2 c_{\tau}}\lambda_0\int_{\R^2}\left(\left|\frac{x-x_0}{\lambda_0}\right|^2-1\right) \e^{2u\circ \mathcal{S}^{-1}(x)}\frac{\rm dx}{(1+|x|^2)^3}.
\end{align*} 
The integral on the right diverges to $+\infty$ as $\lambda_0 \rightarrow 0+$, and to $-\infty$ as $\lambda_0 \rightarrow +\infty.$
By continuity, it follows that there exists a value of $\la_0$ for which the desired conclusion holds.
\end{proof}

\begin{Rem}
The proof shows that for any $u \in H^1(\s^2)$ there exists a unique conformal map (modulo orthogonal transformations) such that the center of mass of $e^{2u_{\tau}}$ vanishes. The corresponding parameter $\lambda_0$ is given by
\begin{align*}
\sqrt{\frac{\int_{\R^2}\frac{\e^{2u \circ \mathcal{S}^{-1}}}{(1+|x|^2)^2}{\rm d}x-(1+|x_0|^2)\int_{\R^2}\frac{e^{2u\circ \mathcal{S}^{-1}(x)}}{(1+|x|^2)^3}}{\int_{\R^2}\frac{\e^{2u\circ \mathcal{S}^{-1}(x)}}{(1+|x|^2)^3}{\rm d} x}}.
\end{align*}
Although positivity of the numerator is not immediately evident, but substituting the expression for $x_0$ from Lemma~\ref{Zero C.O.M.} and applying Jensen’s inequality will show that it is indeed positive.  
\end{Rem}

\noindent
{\bf Proof of the Theorem \ref{Classification}.}
\begin{proof}
We know that $I_{\frac{2}{3}}(\psi_{\tau})=0.$ Since $I_{\frac{2}{3}}(u+c)=I_{\frac{2}{3}}(u)$ for every $u\in H^1(\s^2)$ and $c\in\R$ it follows that 
\begin{align*}
I_{\frac{2}{3}}\left(\frac{3}{4}\mathcal{J}_{\tau}+c\right)=0 \ \mbox{for every conformal map} \ \tau \ \mbox{and} \ c\in \R.
\end{align*}

Conversely, let  $u\in H^1(\s^2)$  be such that $I_{\frac{2}{3}}(u)=0$. Lemma \ref{Zero C.O.M.} ensures the existence of a conformal map $\tau$ such that center of mass of $e^{2u_{\tau}}$ vanishes.
The conformal invariance of $I_{\frac{2}{3}}$ (c.f. Lemma \ref{conf-inv}) then yields
\begin{align*}
J_{\frac{2}{3}}(u_{\tau}) = I_{\frac{2}{3}} (u_{\tau})  = I_{\frac{2}{3}} (u) = 0.
\end{align*}
where $J_{\frac{2}{3}}$ is the functional associated with the Trudinger-Moser-Aubin inequality (c.f. \eqref{TMAubin}).
From the classification of extremizers due to Gui-Moradifam~\cite{GM-2018}, it follows that $u_{\tau} = c,$ for some $c \in \R.$ Finally the identity $\psi_{\tau} \circ \tau^{-1}=-\psi_{\tau^{-1}} + \ \mbox{constant}$ gives $u = \psi_{\tau^{-1}} + c$, which completes the proof.  
\end{proof}

%\section{Stability}
 %To prove it, we will use a conformal invariance of the functional $I_{\frac{2}{3}}$. We denote with $\psi_{\tau}=\frac{3}{4}\ln\mathcal{J}_{\tau}+c_{\tau}$.
 %where $\tau$ is conformal map on the sphere.
We now present the proof of our stability result, adapting an argument of Carlen~\cite{Car-2025}.

\medskip

\noindent
{\bf Proof of Theorem \ref{main}.}
% \begin{Th}
%The following quadratic stability bound holds for any $u\in H^1(\s^2)$,
%\begin{align*}
%&\frac{2}{3}\int_{\s^2}|\nabla_{\s^2} u|^2{\rm d}\omega +2\int_{\s^2}u {\rm d}\omega-\frac{1}{2}\ln\left\{\left(\int_{\s^2}\e^{2u}{\rm d}\omega\right)^2-\sum_{i=1}^3\left(\int_{\s^2}\omega_i\e^{2u}{\rm d}\omega\right)^2\right\}
%\\
%&\geq \frac{1}{6}\inf_{v\in \mathcal{M}}\int_{\s^2}|\nabla_{\s^2}(u-v)|^2{\rm d}\omega.
%\end{align*} 
 %\end{Th}
 
 \begin{proof}
Let $u \in H^1(\s^2).$ By Lemma \ref{Zero C.O.M.}, there exists a conformal map $\tau$ such that 
 \begin{align*} 
  \int_{\s^2}\omega \e^{2u_{\tau}}{\rm d}\omega=0.
 \end{align*}
By the result of Gui-Moradifam~\cite{GM-2018}, we have
 \begin{align*}
 J_{\frac{1}{2}}(u_{\tau})\geq 0,
 \end{align*}
and therefore,
\begin{align*}
J_{\frac{2}{3}}(u_{\tau})\geq \left(\frac{2}{3}-\frac{1}{2}\right) \int_{\s^2} |\nabla_{\s^2} u_{\tau}|^2 {\rm d}\omega.
\end{align*}
Since for any $\alpha$ the functionals satisfy $I_{\alpha}\geq J_{\alpha}$, it follows that 
 \begin{align*}
 I_{\frac{2}{3}}(u_{\tau})\geq \frac{1}{6}\int_{\s^2} |\nabla_{\s^2} u_{\tau}|^2 {\rm d}\omega.
 \end{align*}
 Using the conformal invariance of $I_{\frac{2}{3}}$ (Lemma \ref{conf-inv}) and the relation $\psi_{\tau^{-1}}= - \psi_{\tau}\circ \tau ^{-1} + \ \mbox{constant}$, we deduce  
\begin{align*} 
 I_{\frac{2}{3}}(u)\geq \frac{1}{6}\int_{\s^2} |\nabla_{\s^2} (u - \psi_{\tau^{-1}})|^2 {\rm d}\omega.
\end{align*}
Taking the infimum over all conformal maps $\tau$ completes the proof.
 \end{proof}

\medskip

\noindent
{\bf Acknowledgement.}  D. Karmakar acknowledges the support of the Department of Atomic Energy, Government of India,
under project no. 12-R\&D-TFR-5.01-0520.

\bibliographystyle{alpha}
\bibliography{Onofri.bib}
\end{document}